\numberwithin{equation}{section}
\numberwithin{figure}{section}
\theoremstyle{definition}
\newtheorem{thm}{Theorem}[section]
\newtheorem{prp}[thm]{Proposition}
\newtheorem{lem}[thm]{Lemma}
\newtheorem{cor}[thm]{Corollary}
\newtheorem{dfn}[thm]{Definition}
\newtheorem{rmk}[thm]{Remark}
\newtheorem{ex}[thm]{Example}
\newtheorem{fct}[thm]{Fact}
\newtheorem*{thm*}{Theorem}
\newtheorem*{prp*}{Propostion}
\newtheorem*{lem*}{Lemma}
\newtheorem*{dfn*}{Definition}
\newtheorem*{nt*}{Notattion}
\newtheorem*{fct*}{Fact}
\newtheorem*{rmk*}{Remark}
\newtheorem*{ex*}{Example}
\newcommand{\xr}[1]{\xrightarrow{\, #1 \, }}
\newcommand{\xba}[2]{\stackbin[#2]{#1}{\rightleftarrows}}
\newcommand{\inj}{\hookrightarrow}
\newcommand{\simto}{\xr{\sim}}
\newcommand{\bbF}{\mathbb{F}}
\newcommand{\bbZ}{\mathbb{Z}}
\newcommand{\bbk}{\Bbbk}
\DeclareMathOperator{\catmod}{mod}
\newcommand{\catA}{\mathcal{A}}
\newcommand{\catC}{\mathcal{C}}
\newcommand{\catT}{\mathcal{T}}
\DeclareMathOperator{\id}{id}
\DeclareMathOperator{\Id}{Id}
\DeclareMathOperator{\Ob}{Ob}
\DeclareMathOperator{\Mor}{Mor}
\DeclareMathOperator{\Hom}{Hom}
\DeclareMathOperator{\End}{End}
\DeclareMathOperator{\Ext}{Ext}
\DeclareMathOperator{\Ima}{Im}
\DeclareMathOperator{\Ker}{Ker}
\DeclareMathOperator{\Proj}{Proj}
\newcommand{\LL}{\Lambda}
\newcommand{\Sg}{\Sigma}
\newcommand{\modu}{\,\mathrm{mod}\,}
\newenvironment{enur}{\begin{enumerate}[nosep, label=(\roman*)]}{\end{enumerate}}
\newenvironment{enua}{\begin{enumerate}[nosep, label=(\arabic*)]}{\end{enumerate}}
\newcommand{\colim@}[2]{%
	\vtop{\m@th\ialign{##\cr
			\hfil$#1\operator@font colim$\hfil\cr
			\noalign{\nointerlineskip\kern1.5\ex@}#2\cr
			\noalign{\nointerlineskip\kern-\ex@}\cr}}%
}
\newcommand{\colim}{%
	\mathop{\mathpalette\colim@{\rightarrowfill@\textstyle}}\nmlimits@
}
\begin{document}

\title{A note on Grothendieck groups of periodic derived categories}
\author{Shunya Saito}
\date{December 20, 2021}
\address{Graduate School of Mathematics, Nagoya University, Chikusa-ku, Nagoya. 464-8602, Japan}
\email{m19018i@math.nagoya-u.ac.jp}
\keywords{Grothendieck groups, periodic triangulated categories, tilting theory}

\begin{abstract}
We determine Grothendieck groups of periodic derived categories.
In particular,
we prove that the Grothendieck group of
the $m$-periodic derived category of finitely generated modules over an Artin algebra
is a free $\bbZ$-module if $m$ is even
but an $\bbF_2$-vector space if $m$ is odd.
Its rank is equal to the number of isomorphism classes of simple modules in both cases.
As an application,
we prove that the number of non-isomorphic summands of 
a strict periodic tilting object $T$,
which was introduced in \cite{S21} as a periodic analogue of tilting objects,
is independent of the choice of $T$.
\end{abstract}

\maketitle
\tableofcontents

\section{Introduction}\label{s:Intro}
\subsection{Background}
The representation theory of Artin algebras studies
properties of Artin algebras which is preserved under \emph{Morita equivalence}.
Two Artin algebras are Morita equivalent
if the categories $\catmod \LL$ of finitely generated modules over them are equivalent.
\emph{Grothendieck groups} control many invariants of Morita equivalence.
They are defined for each abelian category and invariant under equivalence of them,
and thus Grothendieck groups themselves are a Morita invariant.
For example,
the Grothendieck group $K_0(\catmod \LL)$ of finitely generated modules
over an Artin algebra $\LL$ is a free $\bbZ$-module whose rank is
equal to the number of isomorphism classes of simple $\LL$-modules.
Thus the number of isomorphism classes of simple modules is a Morita invariant.

\emph{Derived equivalence} is a more flexible framework 
to study Artin algebras than Morita equivalence.
Two Artin algebras are derived equivalent
if the bounded derived categories $D^b(\catmod \LL)$ 
of finitely generated modules over them are triaungulated equivalent.
Of course, Morita equivalence implies derived equivalence.
We can define Grothendieck groups of triangulated categories
and have an isomorphism $K_0(\catmod \LL) \simeq K_0(D^b(\catmod \LL))$.
It means that
Grothendieck groups are not only a Morita invariant but also a derived invariant.
In particular,
the number of isomorphism classes of simple modules is still a derived invariant.
The purpose of this paper is 
to give an analogy of the above results for \emph{periodic derived categories}
(Theorem \ref{thm:main} and Corollary \ref{cor:main})
and to apply it to \emph{periodic tilting theory} (Corollary \ref{cor:const}).

Let $m>0$ be a positive integer.
\emph{$m$-periodic complexes} are a $\bbZ/m\bbZ$-graded version of usual complexes.
For example, a $2$-periodic complex $V$ consists of 
objects and morphisms $V^0 \xba{d^0}{d^1} V^1$ with $d^1d^0=d^0d^1=0$.
For an abelian category $\catA$,
the \emph{$m$-periodic derived category} $D_m(\catA)$ of $\catA$ is 
the localization of the category of $m$-periodic complexes
with respect to quasi-isomorphisms.
See \S \ref{ss:per der} for the precise definition.
These were introduced by Peng and Xiao in \cite{PX97} 
to construct a categorification of full semisimple Lie algebras via Ringel-Hall algebras.
Inspired by this work,
Bridgeland used $2$-periodic derived categories of hereditary algebras
to construct full quantum groups of symmetric Kac-Moody Lie algebras in \cite{Br13}.
Motivated by these studies,
several authors analyzed the structure of $m$-periodic derived categories.
See \cite{Fu12,Go13,Zhao14,St18}.

\subsection{Main results}
Let $m>0$ be a positive integer.
The following theorem is the main result of this paper,
which will be shown in \S \ref{s:gro per der}.
It indicates $m$-periodic derived categories behave like usual derived categories 
if $m$ is even.
In contrast, they behave strangely if $m$ is odd.
\begin{thm}\label{thm:main}
For an essentially small enough projective abelian category $\catA$
of finite global dimension, we have an isomorphism
\[
K_0(D_m(\catA))\simeq
\begin{cases}
K_0(\catA) & \text{if $m$ is even},\\
K_0(\catA)_{\bbF_2}:=K_0(\catA)\otimes_{\bbZ}\bbF_2 & \text{if $m$ is odd},
\end{cases}
\]
induced by the natural functor $\catA \to D_m(\catA)$,
where $\bbF_2$ is the finite field of two elements.
\end{thm}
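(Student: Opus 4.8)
\emph{Overall strategy.} The plan is to exhibit mutually inverse homomorphisms between $K_0(D_m(\catA))$ and the asserted target group. In one direction I use the given functor: let $\phi\colon K_0(\catA)\to K_0(D_m(\catA))$ send $A\in\catA$ to the class of the periodic stalk complex concentrated in degree $0$. This is additive on short exact sequences of $\catA$ because they become distinguished triangles in $D_m(\catA)$. In the other direction I would build an Euler-characteristic map $\chi$. The entire even/odd dichotomy is controlled by one sign phenomenon: the alternating sign $(-1)^i$ attached to cohomological degree descends to the cyclic index set $\bbZ/m\bbZ$ exactly when $m$ is even, and becomes trivial once one reduces modulo $2$.

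\emph{The Euler-characteristic map.} For an $m$-periodic complex $C=(C^i,d^i)_{i\in\bbZ/m\bbZ}$ I would set $\chi(C)=\sum_{i\in\bbZ/m\bbZ}(-1)^i[C^i]\in K_0(\catA)$ when $m$ is even, and $\chi(C)=\sum_{i\in\bbZ/m\bbZ}\overline{[C^i]}\in K_0(\catA)_{\bbF_2}$ when $m$ is odd, where $\overline{(-)}$ is reduction modulo $2$; in the odd case the sign must be dropped precisely because $(-1)^i$ is ill defined on $\bbZ/m\bbZ$. To see $\chi$ descends to $K_0(D_m(\catA))$ I would verify two points. First, quasi-isomorphism invariance: the usual kernel-image bookkeeping shows $\chi(C)$ equals the corresponding sum of cohomology classes $\sum_i(\pm)[H^i(C)]$ (the image terms cancel in pairs when $m$ is even and cancel because $2=0$ when $m$ is odd), and cohomology is a quasi-isomorphism invariant. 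Second, additivity on triangles: a distinguished triangle $X\to Y\to Z\to X[1]$ yields a cyclic long exact sequence of cohomology objects of length $3m$, whose alternating sum of terms vanishes when $m$ is even, and vanishes after reducing mod $2$ when $m$ is odd; this gives $\chi(Y)=\chi(X)+\chi(Z)$.

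\emph{The two-torsion relation and one composite.} For a stalk complex $A$ the $m$-fold shift $A[m]$ returns $A$ to degree $0$ and negates the differential, which is harmless since that differential is zero, so $A[m]\simeq A$ in $D_m(\catA)$. Comparing with the identity $[A[m]]=(-1)^m[A]$ valid in any triangulated $K_0$ gives $[A]=(-1)^m[A]$, hence $2[A]=0$ when $m$ is odd; thus $\phi$ factors through $K_0(\catA)_{\bbF_2}$ in the odd case. The composite $\chi\circ\phi$ is the identity in both cases, since $\chi$ of the degree-$0$ stalk on $A$ is $[A]$ (resp.\ $\overline{[A]}$). In particular $\phi$, resp.\ its factorization, is injective.

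\emph{Surjectivity via the cycle filtration, and the main obstacle.} For the reverse composite it is enough to show the classes of periodic stalk complexes generate $K_0(D_m(\catA))$. This is the step that substitutes for the brutal truncation unavailable for periodic complexes: for any periodic $C$ put $Z^i=\Ker(d^i)$. Since $d^i(Z^i)=0$ the graded object $Z=(Z^i)$ is a periodic subcomplex with zero differential, and since $\Ima(d^i)\subseteq\Ker(d^{i+1})=Z^{i+1}$ the induced differential on $C/Z$ also vanishes; hence
\[
0 \longto Z \longto C \longto C/Z \longto 0
\]
is a short exact sequence of periodic complexes whose outer terms, having zero differential, are finite direct sums of periodic stalk complexes. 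The resulting triangle gives $[C]=[Z]+[C/Z]$, a $\bbZ$-combination of stalk classes, so $\phi$ is surjective; together with $\chi\circ\phi=\id$ this makes $\phi$ (resp.\ its $\bbF_2$-factorization) an isomorphism with inverse $\chi$. I expect the main obstacle to be the well-definedness of $\chi$ — specifically its additivity on triangles and the verification that, for odd $m$, the failure of $(-1)^i$ to descend to $\bbZ/m\bbZ$ accounts for exactly the two-torsion producing $K_0(\catA)_{\bbF_2}$ and imposes no further relations. The hypotheses that $\catA$ be essentially small, projective, and of finite global dimension are what let me treat $D_m(\catA)$ as a genuine triangulated category supporting these cohomology long exact sequences and the functor inducing $\phi$.
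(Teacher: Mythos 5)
Your proposal is correct, and its skeleton coincides with the paper's: a surjection $K_0(\catA)\to K_0(D_m(\catA))$ coming from stalk complexes, retracted by an Euler characteristic built from the cohomological functors $H^i$, with the sign $(-1)^i$ descending to $\bbZ/m\bbZ$ exactly for even $m$ and the odd case forced down to $K_0(\catA)_{\bbF_2}$. But two of your supporting arguments take a genuinely different and more elementary route. First, for generation you use the single short exact sequence $0\to Z\to C\to C/Z\to 0$ with $Z^i=\Ker d^i$, whose outer terms have zero differential and are therefore finite sums of shifted stalks; the paper instead invokes Gorsky's generation result (Fact \ref{fct:stalk}) and, in its hardest case, reruns Gorsky's induction stripping off one degree at a time. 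Your all-at-once cycle filtration is self-contained and does not visibly use enough projectives or finite global dimension --- those hypotheses then only enter to make $D_m(\catA)$ essentially small (Fact \ref{fct:K-proj}), so that $K_0$ exists at all. Second, and more substantially: for odd $m$ the paper must distinguish the cases where the period of $D_m(\catA)$ is $m$ or $2m$, because for a general complex $V$ one has $\Sg^m V=(V^i,-d^i)\not\simeq V$, and establishing $[V]=[\Sg^m V]$ is exactly the content of the inductive argument in case (iii) of the paper's proof. You sidestep this entirely by noting that $\Sg^m A=A$ on the nose for a stalk complex (its differential is zero), so $2[A]=0$ holds for the generators and generation does the rest; the period case analysis disappears. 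The remaining verifications --- that $\chi$ equals $\sum(\pm)[H^i]$ by kernel/image bookkeeping, and that additivity on triangles follows from breaking the cyclic long exact sequence of length $3m$ at one spot, the stray $2[\Ker g]$ term vanishing precisely when $3m$ is even or after reduction mod $2$ --- are the same computations as the paper's Lemmas \ref{lem:cohomo even} and \ref{lem:cohomo odd}, and you have identified correctly that this is where the even/odd dichotomy lives.
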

The result of the even periodic case
is a direct generalization of \cite[Proposition 2.11]{Fu12},
but the odd periodic case is a new one and 
includes a new insight of odd periodic triangulated categories.

As an immediate corollary, we have the following.
\begin{cor}\label{cor:main}
Let $\LL$ be an Artin algebra of finite global dimension,
and let $n$ be the number of isomorphism classes of simple modules. 
Then we have
\[
K_0(D_m(\catmod\LL))\simeq
\begin{cases}
\bbZ^{n} & \text{if $m$ is even},\\
\bbF_2^{n} & \text{if $m$ is odd}.
\end{cases}
\]
\end{cor}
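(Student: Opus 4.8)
The plan is to deduce this directly from Theorem \ref{thm:main} applied to the abelian category $\catA = \catmod\LL$, together with the classical computation of $K_0(\catmod\LL)$ recalled in the introduction. First I would verify that $\catmod\LL$ meets the running hypotheses of Theorem \ref{thm:main}. Since $\LL$ is an Artin algebra, $\catmod\LL$ is essentially small: it admits a skeleton whose objects are cokernels of morphisms between finite direct sums of copies of $\LL$. It has enough projectives because $\LL$ is itself a finitely generated projective generator, so every finitely generated $\LL$-module admits an epimorphism from a finitely generated projective. Finally, the assumption that $\LL$ has finite global dimension is by definition the statement that $\catmod\LL$ has finite global dimension. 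Hence all the hypotheses of Theorem \ref{thm:main} hold for $\catA = \catmod\LL$.

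Next I would invoke Theorem \ref{thm:main} to obtain, via the natural functor $\catmod\LL \to D_m(\catmod\LL)$, the isomorphism $K_0(D_m(\catmod\LL)) \simeq K_0(\catmod\LL)$ when $m$ is even, and $K_0(D_m(\catmod\LL)) \simeq K_0(\catmod\LL) \otimes_{\bbZ} \bbF_2$ when $m$ is odd. It then remains only to identify $K_0(\catmod\LL)$ explicitly. By the Jordan--H\"older theorem, the classes $[S_1], \dots, [S_n]$ of the $n$ pairwise non-isomorphic simple $\LL$-modules form a $\bbZ$-basis of $K_0(\catmod\LL)$, so $K_0(\catmod\LL) \simeq \bbZ^{n}$; this is exactly the fact quoted in \S\ref{s:Intro}.

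Substituting this computation into the two cases finishes the argument: for $m$ even we get $K_0(D_m(\catmod\LL)) \simeq \bbZ^{n}$, and for $m$ odd we get $K_0(D_m(\catmod\LL)) \simeq \bbZ^{n} \otimes_{\bbZ} \bbF_2 \simeq \bbF_2^{n}$, as asserted. I do not expect any genuine obstacle here, since the entire content is carried by Theorem \ref{thm:main}; the only thing to confirm beyond that theorem is that $\catmod\LL$ satisfies its hypotheses, which is routine, and the reduction of $K_0(\catmod\LL)$ to $\bbZ^{n}$, which is standard.
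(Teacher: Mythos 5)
Your proposal is correct and is exactly the argument the paper intends: the corollary is stated as an immediate consequence of Theorem \ref{thm:main} applied to $\catA=\catmod\LL$, combined with the standard Jordan--H\"older computation $K_0(\catmod\LL)\simeq\bbZ^n$ recalled in the introduction. Your verification of the hypotheses of Theorem \ref{thm:main} for $\catmod\LL$ is routine and accurate, so there is nothing to add.
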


We explain a motivation of Theorem \ref{thm:main} and Corollary \ref{cor:main}.
In \cite{S21}, the author proves the periodic tilting theorem,
which gives a sufficient condition for a given triangulated category to be equivalent to
the periodic derived category of an algebra.
We review some definitions and results in \cite{S21}.

For a positive integer $m>0$,
a triangulated category $\catT$ is called \emph{$m$-periodic} 
if its suspension functor $\Sg :\catT \simto \catT$ satisfies
$\Sg^m \simeq \Id_{\catT}$ as additive functors.
\begin{dfn}
Let $\catT$ be an $m$-periodic triangulated category.
\begin{enua}
\item
An object $T\in\catT$ is \emph{$m$-periodic tilting} 
if it satisfies $\Hom_{\catT}(T, \Sg^i T)=0$ for any $i\not\in m\bbZ$ 
and the smallest thick triangulated category containing $T$ coincides with $\catT$.
\item
An $m$-periodic tilting object $T\in \catT$ is called \emph{strict}
if the global dimension of the endomorphism algebra $\End_{\catT}(T)$ is less than $m$.
\end{enua}
\end{dfn}

We will not inform the \emph{algebraic} and \emph{idempotent complete} conditions
in the following theorem.
See \cite{S21} for the detail.
These mild assumptions are satisfied by 
almost all concrete triangulated categories appearing in the study 
of representations of algebras.
\begin{thm}[The periodic tilting theorem {\cite[Corollary 5.4]{S21}}]\label{thm:m-tilt}
Let $\catT$ be an idempotent complete algebraic $m$-periodic 
triangulated category over a perfect field $\bbk$ 
(e.g., if $\bbk$ is an algebraically closed field).
Suppose that $\Hom_\catT(X,Y)$ is finite dimensional over $\bbk$ 
for all objects $X,Y\in\catT$.
If $\catT$ has a strict $m$-periodic tilting object $T$,
then there exists a triangulated equivalence $\catT \to D_m(\catmod \LL)$,
where $\LL :=\End_{\catT}(T)$.
\end{thm}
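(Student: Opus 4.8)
The plan is to realize both $\catT$ and $D_m(\catmod\LL)$ as the perfect derived category of one and the same differential graded (DG) algebra, namely the DG endomorphism algebra of $T$, and then to prove that this DG algebra is \emph{formal} with cohomology the periodicized algebra $\LL[t^{\pm1}]$. First I would use the \emph{algebraic} hypothesis: $\catT$ admits a DG enhancement, so I may lift $T$ to an object $\wt{T}$ of a pretriangulated DG category $\catA$ with $H^0(\catA)\simeq\catT$, and form the DG endomorphism algebra $\mathcal{E}:=\bbR\Hom_{\catA}(\wt{T},\wt{T})$. Since $\Hom_\catT(X,Y)$ is finite dimensional, $T$ is compact, and since $\thick(T)=\catT$ by the definition of a tilting object, $T$ is a compact generator; the \emph{idempotent complete} hypothesis then lets Keller's DG Morita theorem produce a triangulated equivalence $\catT=\thick(T)\simeq\perf(\mathcal{E})$ carrying $T$ to $\mathcal{E}$.

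Next I would compute the cohomology algebra of $\mathcal{E}$. By construction $H^n(\mathcal{E})=\Hom_\catT(T,\Sg^nT)$, which vanishes for $n\notin m\bbZ$ by the $m$-periodic tilting condition, while $\Sg^m\simeq\Id$ identifies $H^{mj}(\mathcal{E})\simeq\End_\catT(T)=\LL$ for every $j$. Thus $H^*(\mathcal{E})\simeq\LL[t^{\pm1}]$ as a graded algebra, where $t$ is a central invertible variable of cohomological degree $m$. A DG module over the graded algebra $\LL[t^{\pm1}]$ (with zero differential) is exactly an $m$-periodic complex of $\LL$-modules, so $\perf(\LL[t^{\pm1}])\simeq D_m(\catmod\LL)$, the finiteness of $\gd\LL$ ensuring that perfect complexes exhaust the periodic derived category.

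The crux is to prove that $\mathcal{E}$ is formal, i.e.\ quasi-isomorphic to $\LL[t^{\pm1}]$, and this is precisely where the strict hypothesis $\gd\LL<m$ enters. I would pass to the minimal $A_\infty$-model of $\mathcal{E}$, with underlying space $H^*(\mathcal{E})=\LL[t^{\pm1}]$ and higher products $m_n$ of degree $2-n$ for $n\geq3$, and kill these by an intrinsic-formality (Kadeishvili) argument: the obstructions lie in the graded Hochschild cohomology $HH^n(\LL[t^{\pm1}])$ in internal degree $2-n$. A K\"unneth decomposition along $\LL[t^{\pm1}]\simeq\LL\otimes_\bbk\bbk[t^{\pm1}]$ together with the smoothness of $\bbk[t^{\pm1}]$ reduces these to $HH^i(\LL)$ with $i\in\{n-1,n\}$, which vanish once $i\geq m>\gd\LL$; the internal degrees that survive are multiples of $m$ and, for $3\leq n\leq m$, never equal $2-n$, so every obstruction vanishes. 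The \emph{perfect field} hypothesis is used here to supply the Wedderburn--Malcev splitting lifting $\LL/\rad\LL$ to a subalgebra of $\LL$, which underlies the construction of the minimal model and the Hochschild computation. Granting formality, the chain $\catT\simeq\perf(\mathcal{E})\simeq\perf(\LL[t^{\pm1}])\simeq D_m(\catmod\LL)$ proves the theorem.

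I expect the formality step to be the principal obstacle. Because $H^*(\mathcal{E})$ is supported on the infinite set $m\bbZ$ of degrees, one cannot argue by naive boundedness; instead one must track how the internal grading inherited from $\LL$ interacts with the periodicity grading carried by $t$, and verify that $\gd\LL<m$ is exactly the numerical condition that forecloses every higher $A_\infty$-product. Establishing this vanishing carefully, rather than the formal bookkeeping around it, is where the real work lies.
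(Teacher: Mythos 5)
A preliminary remark: the paper you are working from does not prove Theorem \ref{thm:m-tilt} at all --- it is imported verbatim from \cite[Corollary 5.4]{S21} --- so there is no internal proof to measure your attempt against. Judged on its own terms, your reconstruction follows the route one would expect and that the cited work essentially takes: algebraicity supplies the DG enhancement and the DG endomorphism algebra $\mathcal{E}=\RHom(\wt T,\wt T)$; generation by $T$ together with idempotent completeness gives $\catT\simeq\perf(\mathcal{E})$ by Keller's Morita theorem; the periodic tilting condition identifies $H^*(\mathcal{E})$ with $\LL[t^{\pm1}]$, $|t|=m$; the equivalence $\perf(\LL[t^{\pm1}])\simeq K_m(\Proj(\catmod\LL))\simeq D_m(\catmod\LL)$ uses $\gd\LL<\infty$ (compare Fact \ref{fct:K-proj}); and the strictness $\gd\LL<m$ is exactly what drives the formality step. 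The main difference is one of implementation: \cite{S21} realizes the last step by truncating $\mathcal{E}$ and lifting $\LL$ to a DG algebra morphism into $\mathcal{E}$, killing obstructions degree by degree in the spirit of Keller's orbit-category arguments, rather than by passing to a minimal $A_\infty$-model; the numerology is the same in both versions ($2-n\in m\bbZ$ forces $n\ge m+2$, where the Hochschild cohomology of $\LL$ has already vanished), so your approach buys nothing and loses nothing beyond packaging. Two points you should tighten, since they are where the hypotheses are actually consumed. (i) The perfect-field assumption is not needed for the minimal model (any field suffices); it is needed to make $\LL/\rad\LL$ separable, hence $\pd_{\LL\otimes_\bbk\LL^{\mathrm{op}}}\LL=\gd\LL$, which is what gives $\mathrm{HH}^i(\LL)=0$ for $i>\gd\LL$. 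Over an imperfect field this vanishing can fail even when $\gd\LL<\infty$ (already for a purely inseparable field extension viewed as a $\bbk$-algebra), and your obstruction argument would collapse; attributing the hypothesis to Wedderburn--Malcev points at the right phenomenon but not at the step that uses it. (ii) You assert that $t$ is central; this requires the naturality of the isomorphism $\Sg^m\simeq\Id_{\catT}$ of \emph{additive functors} demanded in Definition \ref{dfn:periodic}, not merely objectwise isomorphisms $\Sg^mT\simeq T$ --- without it $H^*(\mathcal{E})$ is only a twisted Laurent algebra $\LL[t^{\pm1};\sigma]$ and the identification of $\perf(H^*(\mathcal{E}))$ with $D_m(\catmod\LL)$ breaks down. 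Neither issue is fatal, but both deserve explicit mention in a complete write-up.
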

The periodic tilting theorem and periodic tilting objects are
periodic analogue of usual tilting theorem and tilting objects (cf. \cite{Tilt07}).
Hence we expect that periodic tilting objects have properties similar to the usual one.
However, there is the following example which is taught by Professor Osamu Iyama 
in the conference \emph{Algebraic Lie Theory and Representation Theory, 2021}.
\begin{ex}\label{ex:Iyama}
Let $\bbk$ be a perfect field, 
and $\bbk A_3$ the path algebra of
the quiver $1 \leftarrow 2 \leftarrow 3$ of type $A_3$.
The Auslander-Reiten quiver of $D_2(\catmod \bbk A_3)$ is the following. 
(See Example \ref{ex:path m})
\begin{figure}[H]
\centering
\begin{tikzpicture}
\node (1) at (0,0) {$X_1$};
\node [above right of=1] (12) {$X_2$};
\node [above right of=12] (123) {$X_3$};
\node [below right of=12] (2) {$Y_1$};
\node [above right of=2] (23) {$\circ$};
\node [above right of=23] (234) {$Y_2$};
\node [below right of=23] (3) {$\circ$};
\node [above right of=3] (34) {$\circ$};
\node [above right of=34] (341) {$\circ$};
\node [below right of=34] (4) {$Y_3$};
\node [above right of=4] (41) {$\circ$};
\node [above right of=41] (412) {$Y_4$};
 \draw[->] (1) -- (12);
 \draw[->] (12) -- (123);
 \draw[->] (12) -- (2);
 \draw[->] (123) -- (23);
 \draw[->] (2) -- (23);
 \draw[->] (23) -- (234);
 \draw[->] (23) -- (3);
 \draw[->] (234) -- (34);
 \draw[->] (3) -- (34);
 \draw[->] (34) -- (341);
 \draw[->] (34) -- (4);
 \draw[->] (341) -- (41);
 \draw[->] (4) -- (41);
 \draw[->] (41) -- (412);
 \draw[dashed] (2) -- (1);
 \draw[dashed] (3) -- (2);
 \draw[dashed] (4) -- (3);
 \draw[dashed] (23) -- (12);
 \draw[dashed] (34) -- (23);
 \draw[dashed] (41) -- (34);
 \draw[dashed] (234) -- (123);
 \draw[dashed] (341) -- (234);
 \draw[dashed] (412) -- (341);
\node [above left of=1] ('1) {};
\node [above left of=12] ('12) {};
\node [above left of=123] ('123) {};
\node [below right of=41] (41') {.};
\node [below right of=412] (412') {};
\node [above right of=412'] (4123') {};
 \draw[->] ('1) -- (1);
 \draw[->] ('12) -- (12);
 \draw[->] (41) -- (41');
 \draw[->] (412) -- (412');
 \draw[dotted] (-0.75,-0.1) -- (0.75,1.6);
 \draw[dotted] (5,-0.1) -- (6.5,1.6);
 \draw[dashed] (1) -- (-1,0);
 \draw[dashed] (12) -- ('1);
 \draw[dashed] (123) -- ('12);
 \draw[dashed] (41') -- (4);
 \draw[dashed] (412') -- (41);
 \draw[dashed] (4123') -- (412);
\end{tikzpicture}
\end{figure}
Then $X:=\bigoplus_{i=1}^3 X_i$ and $Y:=\bigoplus_{i=1}^4 Y_i$
are both $2$-periodic tilting objects in $D_2(\catmod \bbk A_3)$.
Thus
the number of non-isomorphic summands of a periodic tilting object is not constant,
while the number for the usual one is constant.
\end{ex}
In this example, we observe that 
$\End(X)\simeq \bbk A_3$ and 
$\End(Y)$ is isomorphic to a self-injective Nakayama algebra,
and hence $X$ is strict but $Y$ is not.
We expect that the number of 
non-isomorphic summands of a \emph{strict} periodic tilting object is constant.
This is true by Corollary \ref{cor:main} and Theorem \ref{thm:m-tilt}.

\begin{cor}\label{cor:const}
Fix a positive integer $m>0$.
Let $\catT$ be an idempotent complete algebraic $m$-periodic 
triangulated category over a perfect field $\bbk$.
Suppose that $\Hom_\catT(X,Y)$ is finite dimensional over $\bbk$ 
for all objects $X,Y\in\catT$.
Then the number of non-isomorphic summands of a strict periodic tilting object 
is constant.
\end{cor}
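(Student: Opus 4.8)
The plan is to use the Grothendieck group of the fixed category $\catT$ as a bridge: a strict $m$-periodic tilting object $T$ produces, via the periodic tilting theorem, an equivalence $\catT \simto D_m(\catmod \LL)$ with $\LL = \End_\catT(T)$, and Corollary \ref{cor:main} then reads off the number of summands of $T$ from $K_0(\catT)$. Since $K_0(\catT)$ is intrinsic to $\catT$ and makes no reference to $T$, this number cannot depend on the choice of tilting object. First I would fix a strict $m$-periodic tilting object $T \in \catT$ and set $\LL := \End_\catT(T)$. As the Hom-spaces of $\catT$ are finite dimensional over $\bbk$, the algebra $\LL$ is a finite-dimensional $\bbk$-algebra, hence an Artin algebra, and strictness gives $\gd \LL < m < \infty$; thus $\LL$ satisfies the hypotheses of Corollary \ref{cor:main}. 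Write $n(T)$ for the number of pairwise non-isomorphic indecomposable direct summands of $T$.

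The key bookkeeping step is the identification of $n(T)$ with the number of isomorphism classes of simple $\LL$-modules. Because $\catT$ is idempotent complete and Hom-finite over a field, it is a Krull-Schmidt category, so $T \simeq \bigoplus_{i=1}^{n(T)} T_i^{\oplus a_i}$ with the $T_i$ pairwise non-isomorphic and indecomposable. The endomorphism algebra $\LL$ is then Morita equivalent to the basic algebra $\End_\catT\big(\bigoplus_{i=1}^{n(T)} T_i\big)$, whose indecomposable projectives are exactly the $\Hom_\catT\big(\bigoplus_i T_i, T_j\big)$ for $j = 1, \dots, n(T)$. Since the number of simple modules is a Morita invariant and equals the number of indecomposable projectives, we conclude that $\LL$ has precisely $n(T)$ isomorphism classes of simple modules.

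Next I would invoke the periodic tilting theorem (Theorem \ref{thm:m-tilt}) to obtain a triangulated equivalence $\catT \simto D_m(\catmod \LL)$, which induces an isomorphism $K_0(\catT) \simeq K_0(D_m(\catmod \LL))$. Combining this with Corollary \ref{cor:main} applied to $\LL$ yields
\[
K_0(\catT) \simeq
\begin{cases}
\bbZ^{n(T)} & \text{if $m$ is even},\\
\bbF_2^{n(T)} & \text{if $m$ is odd}.
\end{cases}
\]
For a second strict $m$-periodic tilting object $T'$ the same chain of arguments gives the identical description with $n(T')$ in place of $n(T)$. As the left-hand side is one and the same group $K_0(\catT)$, we obtain $\bbZ^{n(T)} \simeq \bbZ^{n(T')}$ when $m$ is even and $\bbF_2^{n(T)} \simeq \bbF_2^{n(T')}$ when $m$ is odd. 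Because the rank of a free $\bbZ$-module, as well as the dimension of an $\bbF_2$-vector space, is an isomorphism invariant, it follows that $n(T) = n(T')$, which is the assertion.

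I expect the only genuinely delicate point to be the middle step: confirming that $\catT$ is Krull-Schmidt and that the indecomposable summands of $T$ correspond bijectively to the simple $\End_\catT(T)$-modules. Everything else is a formal consequence of the two cited results, so the substance of the corollary is already carried by Theorem \ref{thm:main} and the periodic tilting theorem.
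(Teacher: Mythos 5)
Your proposal is correct and follows essentially the same route as the paper: invoke Theorem \ref{thm:m-tilt} to identify $\catT$ with $D_m(\catmod\End_\catT(T))$, use Corollary \ref{cor:main} to read off the number of simple modules from $K_0(\catT)$, and note that this number equals the number of non-isomorphic summands of $T$. The only difference is that you spell out the Krull--Schmidt/Morita bookkeeping that the paper leaves implicit in its final sentence.
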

\begin{proof}
Suppose $T_i\in \catT \,(i=1,2 )$ are strict $m$-periodic tilting objects 
and set $\LL_i :=\End_{\catT}(T_i)$.
Then we have two triangulated equivalences
$\catT \simto D_m(\catmod\LL_i)$ by Theorem \ref{thm:m-tilt}.
A triangulated equivalence $D_m(\catmod \LL_1) \simeq \catT \simeq D_m(\catmod \LL_2)$
induces an isomorphism $K_0(D_m(\catmod \LL_1))\simeq K_0(D_m(\catmod \LL_2))$
on the Grothendieck groups.
Hence $\LL_1$ and $\LL_2$ have the same number of isomorphism classes of simple modules
by Corollary \ref{cor:main}.
Because the number of non-isomorphic summands of $T_i$ is equal to
the number of isomorphism classes of simple modules over $\LL_i$,
the corollary follows.
\end{proof}

\subsection*{Organization}
This paper is organized as follows.
In Section \ref{s:pre},
we collect basic properties of triangulated categories
and periodic derived categories
which we use throughout this paper.
In Section \ref{s:gro per tri},
we investigate general properties of 
the Grothendieck group of a periodic triangulated category.
In particular, we deal with the relationship between
cohomological functors on periodic triangulated categories
and homomorphisms between the Grothendieck groups of them.
In Section \ref{s:gro per der},
we give a proof of Theorem \ref{thm:main}.

\subsection*{Acknowledgement}
The author would like to thank Osamu Iyama 
for suggesting the problem and giving an interesting example (Example \ref{ex:Iyama}).
This work is supported by JSPS KAKENHI Grant Number JP21J21767.

\section{Preliminaries}\label{s:pre}
\subsection{Triangulated categories}\label{ss:tri}
In this subsection,
we gather basic notions on triangulated categories,
which we will use.
Throughout this paper,
we assume that all categories and functors are additive.
We denote by $\Sg$ the suspension functor of a triangulated category.

Let $\catT$ be an essentially small triangulated category,
that is, the isomorphism classes $[\catT]$ of objects 
and $\Hom_{\catT}(X,Y) \, (X,Y\in\catT)$ form sets.
For an object $X\in \catT$, the isomorphism class of $X$ 
is denoted by $[X]\in[\catT]$.
The Grothendieck group $K_0(\catT)$ of $\catT$ is defined as
a quotient of a free abelian group :
\[
\bigoplus_{[X]\in [T]}\bbZ [X] \bigm/ \left< [X]-[Y]+[Z] \mid \text{$X\to Y\to Z\to \Sg X$ is an exact triangle } \right>.
\]
The residue class of $[X] \in [\catT]$ in $K_0(\catT)$
is also denoted by $[X]$.
It is obvious that a triangle equivalence $\catT \simto \catT'$ induces 
an isomorphism $K_0(\catT) \simto K_0(\catT')$.

For an essentially small abelian category $\catA$,
we can similarly define the Grothendieck group $K_0(\catA)$ of $\catA$.
Triangulated categories often relate to abelian categories
and it gives a homomorphism between Grothendieck groups of them.
\begin{dfn}\label{dfn:delta,cohom}
Let $\catA$ be an abelian category and let $\catT$ be a triangulated category.
\begin{enua}
\item
A \emph{$\delta$-functor} $\catA\to\catT$ from $\catA$ to $\catT$
is a pair $(F,\delta)$ of a functor $F:\catA\to\catT$
and functorial morphisms $\delta_{W,U}:\Ext^1_{\catA}(W,U)\to \Hom_\catT(FW,\Sg FU)$
for all $U,W\in\catA$ such that for any exact sequence 
$\epsilon : 0\to U \xr{f}V\xr{g}W\to0$ in $\catA$,
\[
FU\xr{Ff}FV\xr{Fg}FW\xr{\delta(\epsilon)}\Sg FU
\]
is an exact triangle in $\catT$.
If no ambiguity can arise,
we will often say that $F:\catA \to \catT$ is a $\delta$-functor.
\item
A functor $F : \catT \to \catA$ is said to be \emph{cohomological}
if for any exact triangle $X\to Y\to Z \to \Sg X$ in $\catT$,
$FX \to FY \to FZ$ is an exact sequence in $\catA$.
We set $F^i:=F\circ \Sg^m : \catT \to \catA$.
\end{enua}
\end{dfn}
Typical examples of $\delta$-functors and cohomological functors are 
the natural inclusion $\catA \to D(\catA)$ from an abelian category to its derived category and the $0$th cohomology functor $H^0 : D(\catA) \to \catA$, respectively.
In \S \ref{ss:per der},
we will see the counterparts of these examples
in periodic derived categories $D_m(\catA)$. 
A $\delta$-functor $\catA \to \catT$ naturally induces 
a homomorphism $K_0(\catA) \to K_0(\catT)$
but a cohomological functor $\catT \to \catA$ does not induce
a homomorphism $K_0(\catT) \to K_0(\catA)$ in general.
It is a difficulty we deal with in this paper.

\begin{dfn}\label{dfn:periodic}
Let $\catT$ be a triangulated category.
\begin{enua}
\item
For a positive integer $m>0$, $\catT$ is \emph{$m$-periodic}
if $\Sg^m \simeq \Id_{\catT}$ as additive functors.
\item
The \emph{period} of $\catT$
is the smallest positive integer $m$
such that $\catT$ is $m$-periodic.
\end{enua}
\end{dfn}
We study Grothendieck groups of periodic triangulated categories 
in \S \ref{s:gro per tri}.

\subsection{Periodic derived categories}\label{ss:per der}
In this subsection, we give a review of periodic derived categories.
See \cite[\S 3]{S21} for a detailed account.
Fix a positive integer $m>0$.
$\bbZ_m$ denotes the cyclic group of order $m$.
Roughly speaking, an $m$-periodic complex is a $\bbZ_m$-graded complex.
In the following definition, replacing $\bbZ_m$ to $\bbZ$, 
we get the usual notion of complexes.
\begin{dfn}\label{dfn:DG m-cpx}
Let $\catC$ be an additive category.
\begin{enua}
\item
An \emph{$m$-periodic complex} $V$ is a family $(V,d_V) = (V^i, d_V^i)_{i\in\bbZ_m}$
of objects $V_i\in \catC$ and morphisms $d_V^i:V^i \to V^{i+1}$ in $\catC$
satisfying $d_V^{i+1}d_V^i=0$ for all $i \in \bbZ_m$. 
\item
A chain map $f:V\to W$ between $m$-periodic complexes $V$ and $W$
is a family $(f_i)_{i\in\bbZ_m}$ of morphisms in $\catC$
satisfying $f^{i+1}d_V^{i}=d_W^if^i$ for all $i \in \bbZ_m$.
\item
$C_m(\catC)$ denotes the category of $m$-periodic complexes and chain maps.
\end{enua}
\end{dfn}
\begin{ex}
Let $\catC$ be an additive category.
\begin{enua}
\item
A $1$-periodic complex is a morphism $d:V \to V$ in $\catC$ with $d^2=0$.
\item
A $2$-periodic complex is a diagram $V^0 \xba{d^0}{d^1} V^1$ in $\catC$ 
with $d^1d^0=d^0d^1=0$.
\end{enua}
\end{ex}

Two chain maps $f,g: V \to W$ of $m$-periodic complexes is \emph{homotopic}
if there exist $s^i:V^i \to W^{i-1}\,(i\in \bbZ_m)$ 
with $f^i-g^i=d_W^{i-1}s^i+s^{i+1}d_V^i$ for all $i\in \bbZ_m$.
This gives rise to the \emph{homotopy category} $K_m(\catC)$ of $m$-periodic complexes.
The shift functor $[1]: C_m(\catC) \to C_m(\catC)$ is defined by
\[
V \mapsto V[1]:=(V^{i+1},-d_V^{i+1})_{i\in\bbZ_m}.
\]
The homotopy category $K_m(\catC)$ 
with the shift functor $[1]:K_m(\catC) \to K_m(\catC)$ as the suspension functor
is a triangulated category.

For an abelian category $\catA$,
the category $C_m(\catA)$ is also an abelian category.
A sequence $0 \to U \xr{f} V \xr{g} W \to 0$
in $C_m(\catA)$ is exact
if and only if
$0 \to U^i \xr{f^i} V^i \xr{g^i} W^i \to 0$
is exact in $\catA$ for all $i\in \bbZ_m$.
Define the $i$th \emph{cohomology} of $V\in C_m(\catA)$ 
by $H^i(V):=\Ker d^i_V/\Ima d^{i-1}_V$ for $i\in\bbZ_m$.
It gives rise to a functor $H^i : K_m(\catA) \to \catA$ for all $i\in \bbZ_m$.
A chain map $f:V\to W$ of $m$-periodic complexes is a \emph{quasi-isomorphism}
if $H^i(f):H^i(V) \to H^i(W)$ is an isomorphism for all $i\in \bbZ_m$.

\begin{dfn}
For an abelian category $\catA$,
the \emph{$m$-periodic derived category} $D_m(\catA)$
is the localization of $K_m(\catA)$
with respect to quasi-isomorphisms.
\end{dfn}
The category $D_m(\catA)$ is a triangulated category 
and the canonical functor $K_m(\catA) \to D_m(\catA)$ is a triangulated functor.

\begin{rmk}\label{rmk:period}
The $m$-periodic derived category $D_m(\catA)$ of an abelian category $\catA$
is a fundamental example of periodic triangulated category 
(See Definition \ref{dfn:periodic}),
but its period is not necessarily $m$.
The period depends on the parity of $m$.
This phenomenon is caused by the change of signs of differential by the shift functor.
For example, the shift of a $1$-periodic complex is $(M,d)[1]=(M,-d)$.
Hence $(M,d)$ and $(M,d)[1]$ is not isomorphic in general.
There are three cases for the period $p$ of $D_m(\catA)$
(See \cite[Proposition 5.1]{S21}):
\begin{enur}
\item $m$ is even and $p=m$,
\item $m$ is odd and $p=m$, and
\item $m$ is odd and $p=2m$.
\end{enur}
\end{rmk}

The $i$th cohomology functor $H^i: K_m(\catA) \to \catA$ induces
a functor $D_m(\catA)\to \catA$.
We also denote it by $H^i$.
It is an advantage of localizing the category $K_m(\catA)$
that an exact sequence in $\catA$ gives an exact triangle in $D_m(\catA)$.

\begin{fct}[{\cite[Propostion 3.12, 3.19]{S21}}]\label{fct:delta}
Let $\catA$ be an abelian category.
\begin{enua}
\item
The natural functor $C_m(\catA) \to D_m(\catA)$ is a $\delta$-functor
(See Definition \ref{dfn:delta,cohom} (1)).
\item
The $i$th cohomology functor $H^i : D_m(\catA)\to \catA$ is
a cohomological functor (See Definition \ref{dfn:delta,cohom} (2)).
\end{enua}
\end{fct}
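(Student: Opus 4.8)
The plan is to deduce both assertions from the single classical mechanism governing derived categories: the mapping cone, together with the fact that the localization functor $Q\colon C_m(\catA)\to D_m(\catA)$ (through $K_m(\catA)$) turns cone sequences into distinguished triangles. The one point to keep in mind is that the grading is cyclic: objects, differentials, homotopies and cohomologies are all indexed by $\bbZ_m$. Since every step below is carried out one index $i\in\bbZ_m$ at a time (snake lemma, degreewise splitness, five lemma), the passage from the usual $\bbZ$-graded situation is purely formal, and the genuinely periodic feature — that a shift by $m$ multiplies the differential by $(-1)^m$ — is invisible at the level of these two statements. So I would first fix, for a chain map $f\colon U\to V$ in $C_m(\catA)$, the mapping cone $\Cone(f)=(U[1]\oplus V,\,d)$ with $d=\begin{bsmatrix} -d_U & 0 \\ f & d_V\end{bsmatrix}$, and recall that by construction of the triangulated structure on $K_m(\catA)$ the sequence $U\xr{f}V\xr{j}\Cone(f)\xr{q}\Sg U$, with $j=\binom{0}{1}$ and $q=(1,0)$, is distinguished, hence remains so in $D_m(\catA)$.

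For (1), given a short exact sequence $\epsilon\colon 0\to U\xr{f}V\xr{g}W\to 0$ in $C_m(\catA)$, the map $g$ induces a chain map $\pi=(0,g)\colon\Cone(f)\to W$ (a chain map precisely because $gf=0$ and $gd_V=d_Wg$). I would show $\pi$ is a quasi-isomorphism: comparing the cyclic cohomology long exact sequences of $\Cone(f)$ and of $\epsilon$, or equivalently applying the five lemma index by index, gives $H^i(\pi)\colon H^i(\Cone f)\simto H^i(W)$ for every $i\in\bbZ_m$. Thus $\pi$ becomes invertible in $D_m(\catA)$, and using $\pi j=g$ the distinguished triangle above transports to $U\xr{f}V\xr{g}W\xr{\delta(\epsilon)}\Sg U$ with $\delta(\epsilon):=q\circ\pi^{-1}$. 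It then remains to see that $\delta$ descends to a functorial, additive map on $\Ext^1_{C_m(\catA)}(W,U)$: equivalent extensions produce cones related by an isomorphism compatible with $j,q,\pi$, so $\delta(\epsilon)$ depends only on the class of $\epsilon$; naturality in $U,W$ comes from functoriality of the cone; and compatibility with Baer sums is the standard bilinearity check for the connecting homomorphism.

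For (2), I would first note that $H^i$ sends homotopic maps to equal maps and quasi-isomorphisms to isomorphisms, so by the universal property of the localization it factors through a functor $H^i\colon D_m(\catA)\to\catA$. Since every distinguished triangle is isomorphic to the image of a standard cone triangle and exactness of the $H^i$-image is invariant under isomorphism of triangles, it suffices to treat $U\xr{f}V\xr{j}\Cone(f)\xr{q}\Sg U$. Here $0\to V\xr{j}\Cone(f)\xr{q}\Sg U\to 0$ is split in each degree $i\in\bbZ_m$ (as $\Cone(f)^i=U^{i+1}\oplus V^i$), so the snake lemma produces the cyclic long exact sequence
\[
\cdots\to H^i(U)\xr{H^i f}H^i(V)\to H^i(\Cone f)\to H^{i+1}(U)\to\cdots,
\]
whose connecting map is induced by $f$. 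In particular $H^i(U)\to H^i(V)\to H^i(\Cone f)$ is exact at the middle term for each $i$, which is exactly the cohomological condition for this triangle; applying it to the $\Sg^i$-rotations covers all indices.

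The part I expect to cost the most care is not any single homological step but the bookkeeping in (1): verifying that $\delta$ is well defined on $\Ext^1_{C_m(\catA)}(W,U)$ rather than merely on individual extensions, and that it is additive for the Baer sum, which requires comparing cone constructions for equivalent and for summed extensions up to homotopy. None of the periodic subtleties — the signs under the shift, the cyclic index set — interfere here; they become essential only later, when one tries to turn the cohomological functors $H^i$ into homomorphisms on Grothendieck groups. In the end the argument is the classical derived-category proof performed degreewise over $\bbZ_m$.
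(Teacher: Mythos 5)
The paper does not prove this statement at all: it is imported as a \emph{Fact} with a citation to \cite[Propositions 3.12, 3.19]{S21}, so there is no in-paper argument to compare against. Your proof is the standard mapping-cone argument — replace $W$ by $\Cone(f)$ via the quasi-isomorphism $(0,g)$ to get the connecting morphism $\delta(\epsilon)=q\circ\pi^{-1}$, and use the degreewise-split sequence $0\to V\to\Cone(f)\to \Sg U\to 0$ and the snake lemma to get the (now cyclic) long exact sequence for $H^i$ — carried out index-by-index over $\bbZ_m$, and it is correct; as you note, the only periodic subtlety (the sign $(-1)^m$ on the differential under $\Sg^m$) plays no role here, and your flagged bookkeeping for well-definedness and Baer-sum additivity of $\delta$ on $\Ext^1_{C_m(\catA)}(W,U)$ is exactly the remaining routine check.
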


The purpose of this paper is
to study the Grothendieck groups of periodic derived categories.
When do we define the Grothendieck groups of periodic derived categories?
In other words when are periodic derived categories essentially small?
The following theorem answers this question.
See Proposition \ref{prp:summ} below.
\begin{fct}[{\cite[Lemma 9.5]{Go13}}, cf. {\cite[Corollary 3.28]{S21}}]\label{fct:K-proj}
Let $\catA$ be an enough projective abelian category of finite global dimension,
and $\Proj \catA$ is the full subcategory of projective objects in $\catA$.
Then the natural functor $K_m(\Proj \catA) \to D_m(\catA)$ is a triangulated equivalence.
\end{fct}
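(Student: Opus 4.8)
\smallskip
The plan is to treat the functor in question, which I shall call $\Phi$, as the composite $K_m(\Proj\catA)\inj K_m(\catA)\to D_m(\catA)$ of the inclusion and the Verdier localization. Since complexes of projectives are closed under the shift and under taking cones, the first arrow is a triangulated embedding, and the second is triangulated by construction; hence $\Phi$ is triangulated, and it suffices to prove that $\Phi$ is fully faithful and essentially surjective. The whole argument is organized around one lemma that isolates the role of the finite global dimension hypothesis.

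\smallskip
\emph{Key Lemma.} Every acyclic $m$-periodic complex of projectives is contractible. To see this, let $N=(N^i,d_N^i)_{i\in\bbZ_m}\in C_m(\Proj\catA)$ be acyclic and set $Z^i:=\Ker d_N^i=\Ima d_N^{i-1}$ for $i\in\bbZ_m$. Acyclicity yields short exact sequences $0\to Z^i\to N^i\xr{d_N^i}Z^{i+1}\to0$, exhibiting $Z^i$ as a first syzygy of $Z^{i+1}$. Every $Z^i$ must in fact be projective: a splitting propagates projectivity from $Z^{i+1}$ back to $Z^i$, so if one $Z^i$ were non-projective then all of them would be, forcing $\pd Z^{i+1}=1+\pd Z^{i}$ for every $i\in\bbZ_m$; since $\gd\catA<\infty$ makes each $\pd Z^i$ finite, telescoping this relation around the finite cycle $\bbZ_m$ produces the absurdity $0=m$. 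Hence all the sequences split and $N$ is contractible. An immediate consequence is that a quasi-isomorphism between objects of $K_m(\Proj\catA)$ has acyclic, hence contractible, cone, and is therefore a homotopy equivalence. The finiteness hypothesis is essential here: over $\catA=\catmod\,\bbk[x]/(x^2)$ the $1$-periodic complex $(\bbk[x]/(x^2),\,\cdot x)$ is acyclic and consists of a projective module, yet is not contractible.

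\smallskip
Granting the Key Lemma, both remaining properties follow from the calculus of fractions presenting $D_m(\catA)=K_m(\catA)[\mathrm{qis}^{-1}]$. For fullness, represent a morphism $\Phi P\to\Phi Q$ by a roof $P\xl{s}W\xr{h}Q$ with $s$ a quasi-isomorphism, choose a projective resolution $\tilde W\to W$, and observe that $\tilde W\to W\xr{s}P$ is a quasi-isomorphism between objects of $K_m(\Proj\catA)$, hence a homotopy equivalence; inverting it in $K_m(\Proj\catA)$ rectifies the roof to an honest morphism in the image of $\Phi$. Faithfulness uses the same device: if $f\colon P\to Q$ becomes zero in $D_m(\catA)$ there is a quasi-isomorphism $s\colon W\to P$ with $fs$ null-homotopic, and pulling back along a projective resolution turns $s$ into a homotopy equivalence, forcing $f$ itself to be null-homotopic. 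For essential surjectivity, resolve an arbitrary $V\in C_m(\catA)$ by a Cartan--Eilenberg-type projective resolution and totalize; since $\gd\catA=d<\infty$ the resolution direction has width at most $d+1$, so in each $\bbZ_m$-degree the total object is a finite direct sum of projectives, yielding a genuine object of $C_m(\Proj\catA)$ quasi-isomorphic to $V$.

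\smallskip
I expect the density step to be the main obstacle. Producing the periodic projective resolution and checking that its totalization folds back into an honest $m$-periodic complex of projectives is exactly where finite global dimension is used, both to guarantee that the total object is finite in each $\bbZ_m$-degree and to control the sign twist introduced by the periodic shift functor---the very twist responsible for the period-doubling phenomenon recorded in Remark~\ref{rmk:period}---so that $\bbZ_m$-periodicity is respected. By contrast, the conceptual heart of the proof is the cyclic syzygy computation in the Key Lemma, which is the precise point at which the finite global dimension hypothesis becomes indispensable.
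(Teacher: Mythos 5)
The paper does not actually prove this statement---it is imported as a Fact with citations to \cite{Go13} and \cite{S21}---so there is no in-paper argument to compare against; judged on its own, your proof follows the same standard route as those references and is essentially sound. The cyclic syzygy computation in your Key Lemma is correct: from $0\to Z^i\to N^i\to Z^{i+1}\to 0$ with $N^i$ projective, projectivity of $Z^{i+1}$ forces projectivity of $Z^i$, so non-projectivity of one $Z^i$ propagates to all of them around the cycle, and then $\pd Z^{i+1}=\pd Z^i+1$ telescopes to the contradiction $0=m$; finite global dimension enters exactly to make every $\pd Z^i$ finite, and your $\bbk[x]/(x^2)$ example correctly shows the hypothesis cannot be dropped. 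The reduction of full faithfulness to this lemma via the calculus of fractions is also standard (note only that it silently invokes density, so the logical order should be: Key Lemma, then density, then full faithfulness). The one thin spot is the step you flag yourself, essential surjectivity. Your Cartan--Eilenberg totalization does work, but two details need to be written out: the resolution must be built with all columns of length at most $\gd\catA$ (possible by the horseshoe lemma, since every $B^i(V)$, $H^i(V)$, $Z^i(V)$, $V^i$ has finite projective dimension), so that the total object is a finite direct sum in each $\bbZ_m$-degree; and the Koszul sign in the total differential must be placed on the bounded $\bbZ$-indexed resolution direction, because $(-1)^i$ is not well defined for $i\in\bbZ_m$ with $m$ odd. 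The cited sources (and the proof of Fact \ref{fct:stalk} invoked later in the paper) instead obtain density by Gorsky's induction on the number of nonzero components of $V$, which avoids double complexes and sign conventions altogether; either route is fine, but the inductive one is closer to what the paper leans on elsewhere.
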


The following fact implies surjectivity of
a homomorphism $K_0(\catA) \to K_0(D_m(\catA))$
induced by the natural inclusion $\catA \to D_m(\catA)$.
See Proposition \ref{prp:summ} below.
\begin{fct}[{\cite[Proposition 9.7]{Go13}, cf. \cite[Lemma 3.26]{S21}}]\label{fct:stalk}
Let $\catA$ be an enough projective abelian category of finite global dimension.
Then the smallest triangulated subcategory of $D_m(\catA)$ containing $\catA$
coincides with $D_m(\catA)$.
\end{fct}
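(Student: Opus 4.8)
The plan is to prove the a priori stronger statement that \emph{every} object of $D_m(\catA)$ already lies in $\mathcal{G}$, the smallest triangulated subcategory of $D_m(\catA)$ containing the image of the natural functor $\catA\to D_m(\catA)$. First I would fix notation: write the natural functor as $B\mapsto S_0(B)$, the $m$-periodic complex with $B$ in degree $0$ and zero elsewhere, and for $i\in\bbZ_m$ let $S_i(B)$ denote its degree-$i$ analogue. Since a triangulated subcategory is closed under the suspension $\Sg$ (hence under all shifts) and under finite direct sums, and each $S_i(B)$ is a shift of $S_0(B)$, the subcategory $\mathcal{G}$ contains every finite direct sum of stalks $S_i(B)$ with $B\in\catA$. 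Because localization does not change the class of objects, every object of $D_m(\catA)$ is represented by some $V=(V^i,d^i)_{i\in\bbZ_m}\in C_m(\catA)$, so it suffices to show $V\in\mathcal{G}$ for all such $V$.

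The heart of the argument is the short exact sequence
\[
0\longto U \longto V \longto W \longto 0
\qquad\text{in } C_m(\catA),
\]
where $U^i:=\Ima d^{i-1}\subseteq V^i$ and $W^i:=\Cok d^{i-1}=V^i/\Ima d^{i-1}$, with the evident inclusions and projections. It is exact in each degree $i\in\bbZ_m$ by construction, hence exact in $C_m(\catA)$. The key observation is that both $U$ and $W$ carry the \emph{zero} differential: the differential of $U$ is $d^i|_{\Ima d^{i-1}}$, which vanishes because $d^id^{i-1}=0$, while the induced differential on $W$ sends $\overline{x}$ to $\overline{d^ix}=0$ since $d^ix\in\Ima d^i$. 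Therefore $U=\bigoplus_{i\in\bbZ_m}S_i(\Ima d^{i-1})$ and $W=\bigoplus_{i\in\bbZ_m}S_i(\Cok d^{i-1})$ are finite direct sums of stalks, so $U,W\in\mathcal{G}$. Applying the $\delta$-functor $C_m(\catA)\to D_m(\catA)$ of Fact \ref{fct:delta}(1) to this sequence yields an exact triangle $U\to V\to W\to \Sg U$ in $D_m(\catA)$; as $U$ and $W$ lie in $\mathcal{G}$ and $\mathcal{G}$ is triangulated, so does $V$. This gives $\mathcal{G}=D_m(\catA)$.

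The point where I expect the naive approach to break down—and hence the step worth isolating—is precisely the reduction of a general complex to stalks. In the bounded derived category one argues by \emph{brutal truncation}, peeling a term off one end and inducting on the number of nonzero terms; for an $m$-periodic complex this fails, because when all differentials are nonzero the complex is genuinely cyclic and admits no nontrivial brutal subcomplex: the wrap-around differential $d^{m-1}:V^{m-1}\to V^0$ prevents any proper set of degrees from being closed under $d$. The device that sidesteps this is to use the \emph{image subcomplex} $(\Ima d^{i-1})_{i}$ instead, whose defining relation $d^id^{i-1}=0$ forces it to be a subcomplex with vanishing differential regardless of how the complex wraps around, while the finiteness of $\bbZ_m$ keeps the resulting direct sums of stalks finite. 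I would remark that this argument uses neither Fact \ref{fct:K-proj} nor the hypotheses that $\catA$ have enough projectives or finite global dimension; those enter only later, in the $K_0$-computation of Theorem \ref{thm:main}.
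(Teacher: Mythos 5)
Your argument is correct, but it is not the argument the paper relies on. The paper takes Fact \ref{fct:stalk} from Gorsky and, in case (iii) of the proof of Theorem \ref{thm:main}, displays the underlying technique: an induction on the number of nonzero components, in which one fixes a degree (say $0$) and peels off two stalks, first $V^0/Z^0(V)$ via $0\to U\to V\to V^0/Z^0(V)\to 0$ and then $Z^0(V)$ via $0\to Z^0(V)\to U\to W\to 0$, before applying the induction hypothesis to the complex $W$ having one fewer nonzero term. You instead handle all degrees simultaneously: the single short exact sequence $0\to (\Ima d^{i-1})_{i}\to V\to (\Cok d^{i-1})_{i}\to 0$, whose outer terms carry zero differential because $d^{i}d^{i-1}=0$ forces $\Ima d^{i-1}\subseteq \Ker d^{i}$, exhibits $V$ as an extension of two finite direct sums of stalks, so no induction is needed. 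Both routes are valid; yours is shorter, and, as you correctly observe, it uses only Fact \ref{fct:delta}(1) and so establishes the generation statement for an arbitrary abelian category, without enough projectives or finite global dimension (those hypotheses are genuinely needed elsewhere, e.g.\ for Fact \ref{fct:K-proj} and essential smallness). The degree-by-degree induction is what the paper reuses verbatim in case (iii) to prove $[V]=[\Sg^m V]$ in $K_0(D_m(\catA))$; it is worth noting that your decomposition would serve there as well, since $\Ima(-d^{i-1})=\Ima(d^{i-1})$ gives the same two outer terms for $V$ and $\Sg^m V$. Your diagnosis of why brutal truncation breaks down for cyclic complexes, and of how the image subcomplex circumvents the wrap-around differential, is exactly on point.
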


We summarize the facts about periodic derived categories
and rephrase them as statements about its Grothendieck groups.
\begin{prp}\label{prp:summ}
Let $\catA$ be an essentially small enough projective abelian category 
of finite global dimension.
\begin{enua}
\item
$D_m(\catA)$ is essentially small.
In particular, we can define the Grothendieck group of $D_m(\catA)$.
\item
The natural functor $\catA \to D_m(\catA)$ is a $\delta$-functor.
In particular, we have a induced homomorphism 
$\psi : K_0(\catA) \to K_0(D_m(\catA))$.
\item
The smallest triangulated subcategory of $D_m(\catA)$ containing $\catA$
coincides with $D_m(\catA)$.
In particular,
the homomorphism $\psi : K_0(\catA) \to K_0(D_m(\catA))$ is surjective.
\end{enua}
\end{prp}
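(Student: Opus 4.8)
The plan is to read off the three assertions from the facts already collected, supplying in each case the routine categorical glue. For (1) I would establish essential smallness by transport of structure along the equivalence of Fact~\ref{fct:K-proj}. Since $\catA$ is essentially small, its full subcategory $\Proj\catA$ is too, and an $m$-periodic complex over $\Proj\catA$ is specified by finitely many objects and differentials drawn from a set; hence the objects and morphisms of $C_m(\Proj\catA)$, and a fortiori those of the homotopy category $K_m(\Proj\catA)$, form sets. As $K_m(\Proj\catA)\simeq D_m(\catA)$ by Fact~\ref{fct:K-proj}, the category $D_m(\catA)$ is essentially small and $K_0(D_m(\catA))$ is defined.

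Next, for (2) I would factor the natural functor $F:\catA\to D_m(\catA)$ as the stalk embedding $\catA\to C_m(\catA)$ followed by the localization $C_m(\catA)\to D_m(\catA)$. The latter is a $\delta$-functor by Fact~\ref{fct:delta}(1), and the stalk embedding is exact; since precomposing a $\delta$-functor with an exact functor again yields a $\delta$-functor (an exact sequence in $\catA$ is sent to one in $C_m(\catA)$, hence to an exact triangle in $D_m(\catA)$), this gives the desired $\delta$-functor $F$. To produce $\psi$, I note that each exact sequence $0\to U\to V\to W\to 0$ in $\catA$ yields an exact triangle, whence $[FV]=[FU]+[FW]$ in $K_0(D_m(\catA))$; thus the assignment $[A]\mapsto[FA]$ respects the defining relations of $K_0(\catA)$ and descends to the homomorphism $\psi$.

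Finally, for (3) the equality $\tria(\catA)=D_m(\catA)$ is exactly Fact~\ref{fct:stalk}, so it remains to deduce surjectivity of $\psi$. Here I would invoke the general principle that if a triangulated category is generated, as a triangulated subcategory, by a class of objects $\catS$, then $K_0$ is generated by the classes $[S]$ with $S\in\catS$. Concretely, every object of $\tria(\catA)$ is built from finite direct sums of shifts of objects of $\catA$ by finitely many mapping cones, and in $K_0(D_m(\catA))$ one has $[\Sg^i A]=(-1)^i[A]$ together with additivity along triangles; so the class of any object is an integral combination of the $[A]$ with $A\in\catA$, all of which lie in the image of $\psi$. I expect this last implication---passing from triangulated generation to generation of $K_0$---to be the only point requiring genuine argument rather than citation, and I would make it precise by an induction on the number of cones used to assemble a given object out of $\catA$.
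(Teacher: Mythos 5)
Your proposal is correct and follows essentially the same route as the paper's own proof: part (1) via transport along the equivalence $K_m(\Proj\catA)\simeq D_m(\catA)$ of Fact \ref{fct:K-proj}, part (2) by factoring the natural functor through the exact stalk embedding into $C_m(\catA)$ and the $\delta$-functor of Fact \ref{fct:delta}, and part (3) by combining Fact \ref{fct:stalk} with the standard observation that objects of a triangulated subcategory generated by $\catA$ are iterated extensions of shifts of objects of $\catA$. No gaps.
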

\begin{proof}
(1)
It is not obvious that $\Hom_{D_m(\catA)}(V,W)$ forms a set in general
since $D_m(\catA)$ is the localization of the category $K_m(\catA)$.
However, by Fact \ref{fct:K-proj}, 
the natural functor $K_m(\Proj\catA) \simto D_m(\catA)$ is an equivalence.
Note that $\Mor_{\catA}:=\bigcup_{M,N\in [\catA]}\Hom_{\catA}(M,N)$
forms a set because $\catA$ is essentially small.
The category $K_m(\Proj\catA)$ is clearly essentially small 
since $\Hom_{K_m(\catA)}(V,W)$ is a quotient of a set 
$\Hom_{C_m(\catA)}(V,W) \subset \prod_{i=0}^{m-1}\Hom_{\catA}(V^i,W^i)$
and $[K_m(\Proj\catA)] \subset \prod_{i=0}^{m-1}\Mor_{\catA}$.
Thus $D_m(\catA)$ is also essentially small.

(2)
Since the natural inclusion $\catA \inj C_m(\catA)$ is exact
and the natural functor $C_m(\catA) \to D_m(\catA)$ is a $\delta$-functor,
their composition $\catA \to D_m(\catA)$ is also a $\delta$-functor.
Thus an exact sequence $0\to L \to M\to N\to 0$ in $\catA$
gives an exact triangle $L\to M\to N \to L[1]$ in $D_m(\catA)$.
It implies
$\psi : K_0(\catA) \ni [M] \mapsto [M] \in K_0(D_m(\catA))$
is a well-defined homomorphism. 

(3)
For a class $S$ of objects in a triangulated category,
it is well-known that
an object of the smallest triangulated category containing $S$
is a (finite) iterated extension of shifts of objects of $S$.
Hence Fact \ref{fct:stalk} implies an object of $D_m(\catA)$ is
an iterated extension of shifts of objects of $\catA$,
and thus $\psi$ is surjective.
\end{proof}

Finally,
we explain the relationship between periodic complexes and usual complexes.
We do not use the following results and explanations in this paper
but it gives a good picture of periodic derived categories.
Let $\catA$ be an abelian category.
$C(\catA)$ (resp. $C^b(\catA)$) denotes
the category of usual (resp. bounded) complexes over $\catA$.
Define functors
\[
\iota : C_m(\catA) \to C(\catA),\quad
V \mapsto \left(V^{(i \modu m)}, d_V^{(i \modu m)}  \right)_{i\in\bbZ}
\]
and
\[
\pi : C^b(\catA) \to C_m(\catA),\quad
V \mapsto \left( \bigoplus_{j\equiv i \modu m} V^{j}, \bigoplus_{j\equiv i \modu m} d_V^{j}\right)_{i\in \bbZ_m}
\]
The functors $\iota$ and $\pi$ preserve quasi-isomorphisms,
and induce triangulated functors $\iota : D_m(\catA) \to D(\catA)$ 
and $\pi : D^b(\catA) \to D_m(\catA)$, respectively.
The functor $\pi : D^b(\catA) \to D_m(\catA)$ is called the \emph{covering functor}.
This name comes from the following fact.
\begin{fct}[{\cite[Corollary 3.29]{S21}}]\label{fct:cov}
Let $\catA$ be an enough projective abelian category 
of finite global dimension.
For any $V, W \in D^b(\catA)$,
we have $\Hom_{D_m(\catA)}(\pi V, \pi W)=\bigoplus_{i\in\bbZ}\Hom_{D^b(\catA)}(V,W[mi])$.
\end{fct}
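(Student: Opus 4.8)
The statement is the $\Hom$-formula expressing that the covering functor $\pi$ realizes $D_m(\catA)$ as the triangulated hull of the orbit category of $D^b(\catA)$ under the autoequivalence $[m]$. The plan is to reduce everything to homotopy categories of projectives, where morphisms are computed explicitly as chain maps modulo homotopy, and then to decompose such chain maps according to how far they shift the cohomological degree.

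First I would replace $V$ and $W$ by bounded complexes of projectives $P$ and $Q$. Since $\catA$ has enough projectives and finite global dimension, every bounded complex is quasi-isomorphic to one with projective terms, so $D^b(\catA) \simeq K^b(\Proj\catA)$; similarly $D_m(\catA) \simeq K_m(\Proj\catA)$ by Fact \ref{fct:K-proj}. Because $\pi$ sends a bounded complex of projectives to a periodic complex whose terms $\bigoplus_{j \equiv a \modu m} P^j$ are again projective and preserves quasi-isomorphisms, it corresponds under these equivalences to the naive folding functor $K^b(\Proj\catA) \to K_m(\Proj\catA)$. Thus it suffices to prove $\Hom_{K_m(\Proj\catA)}(\pi P, \pi Q) \cong \bigoplus_{i \in \bbZ}\Hom_{K^b(\Proj\catA)}(P, Q[mi])$.

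Next, at the level of chain maps, a morphism $\pi P \to \pi Q$ in $C_m(\Proj\catA)$ has, for each residue $a$, a component $(\pi P)^a = \bigoplus_{j \equiv a} P^j \to \bigoplus_{k \equiv a} Q^k = (\pi Q)^a$; boundedness makes these finite direct sums, so the morphism is exactly a family of components $f^{jk}\colon P^j \to Q^k$ indexed by pairs with $j \equiv k \modu m$. Sorting these by the integer $i := (k-j)/m$, the components with a fixed $i$ form a degree-preserving family $(P^j \to Q^{j+mi})_j$, and the chain-map identity decouples across distinct values of $i$ because every differential raises degree by exactly one and hence preserves $i$. Boundedness of $P$ and $Q$ also forces all but finitely many $i$ to contribute zero, so the direct sum over $i \in \bbZ$ is legitimate.

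The last step is to identify, for each fixed $i$, the family obtained above with an honest chain map $P \to Q[mi]$ and to pass to the homotopy category. Here one must account for the sign $(-1)^{mi}$ in the shifted differential $d_{Q[mi]} = (-1)^{mi}d_Q$, which is absent from the unsigned folded differential of $\pi Q$; rescaling the $j$-th component by $(-1)^{mij}$ supplies the bijection between the two conditions. The same rescaling applies verbatim to periodic homotopies, so the decomposition is compatible with the homotopy relation and descends to $K_m(\Proj\catA)$ and $K^b(\Proj\catA)$, yielding the claimed isomorphism. The main obstacle is precisely this sign bookkeeping together with checking that periodic null-homotopies decouple into null-homotopies of the graded pieces; once the degree-shift grading is seen to be respected by both the differentials and the homotopy operators, the rest is formal.
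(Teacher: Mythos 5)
The paper itself offers no proof of this statement: it is imported as a Fact from \cite[Corollary 3.29]{S21}, and the author explicitly says the surrounding material is not used in the paper, so there is no in-text argument to compare yours against. Judged on its own, your proposal is correct and is the standard argument. The reduction to $K^b(\Proj\catA)$ and $K_m(\Proj\catA)$ is exactly what the hypotheses (enough projectives, finite global dimension) are for, and $\pi$ is compatible with these equivalences because it sends bounded complexes of projectives to periodic complexes of projectives and preserves quasi-isomorphisms, so full faithfulness of $K_m(\Proj\catA)\to D_m(\catA)$ (Fact \ref{fct:K-proj}) lets you compute everything with chain maps modulo homotopy. The decomposition of a chain map $\pi P\to\pi Q$ into families $(P^j\to Q^{j+mi})_j$ indexed by $i\in\bbZ$ is legitimate precisely because the differentials of $\pi P$ and $\pi Q$ raise the integer index by exactly one and hence preserve $i=(k-j)/m$; the same holds for the degree $-1$ maps used as homotopies, so both the chain-map condition and the null-homotopy relation decouple over $i$, and boundedness makes all relevant sums finite. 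Your sign $(-1)^{mij}$ is the right correction: conjugation by $\diag\bigl((-1)^{mij}\bigr)_j$ identifies $\pi(Q[mi])=(\pi Q)[mi]$ with $\pi Q$ in $C_m(\Proj\catA)$, which is exactly the point where the parity of $m$ would otherwise cause trouble (cf.\ Remark \ref{rmk:period}). I see no gap.
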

For an additive category $\catC$ and an auto equivalence $F:\catC \simto \catC$,
the orbit category $\catC/ F$ of $\catC$ by $F$ is defined by
\[
\Ob(\catC/ F):=\Ob \catC,\quad
\Hom_{\catC/F}(X,Y):=\bigoplus_{i\in\bbZ} \Hom_{\catC}(X,F^iY).
\]
The composition of two morphisms $f:X \to F^p Y$ and $g:Y\to F^q Z$
is defined by $(F^pg)f : X \to F^{p+q}Z$.
The natural functor $\pi : \catC \to \catC/F$ is called 
the \emph{covering functor} in general.
The identity $\id_{FX}$ gives rise to 
a natural isomorphism $\pi FX \to \pi X$ for all $X\in \catC$.
Roughly speaking, the orbit category $\catC/F$ is obtained by
identifying $\{F^n\}_{n\in \bbZ}$-orbits of objects of $\catC$.

Fact \ref{fct:cov} means that $\Ima \pi \subset D_m(\catA)$ 
is the orbit category $D^b(\catA)/[m]$ of the bounded derived category 
by the $m$-shift functor.
In fact, $D_m(\catA)$ is 
the smallest triangulated category containing the orbit category $D^b(\catA)/[m]$.
We do not explain what it means precisely. See \cite{Kel05,Zhao14} for details.

An abelian category is \emph{hereditary} 
if it is enough projective and of global dimension $1$.
Periodic derived categories of hereditary abelian categories are rather simple.
\begin{fct}[{\cite[Lemma 4.2]{Br13}, \cite[Lemma 5.1]{St18}, cf. \cite[Proposition 3.32]{S21}}]\label{fct:hered}
Let $\catA$ be a hereditary abelian category.
Then for any $m$-periodic complex $V\in D_m(\catA)$,
there exists an isomorphism $V\simeq \bigoplus H^i(V)[-i]$ in $D_m(\catA)$.
\end{fct}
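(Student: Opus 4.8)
The plan is to reduce the statement to a direct-sum decomposition of a complex of projectives and to exploit the special feature of a hereditary category that every subobject of a projective is again projective. First, since a hereditary $\catA$ is enough projective of global dimension $1$, Fact \ref{fct:K-proj} lets me replace $V$ by a homotopy-equivalent $m$-periodic complex of projectives $P=(P^i,d^i)_{i\in\bbZ_m}$, under which the isomorphism type and each cohomology $H^i$ are unchanged. For such a $P$ the boundary $B^{i+1}:=\Ima d^i$ is a subobject of the projective $P^{i+1}$ and the cycle $Z^i:=\Ker d^i$ is a subobject of the projective $P^i$; global dimension one forces both to be projective. Consequently the short exact sequence $0\to B^i\to Z^i\to H^i\to 0$ is a length-one projective resolution of $H^i$ for every $i\in\bbZ_m$.

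Second, I would split the complex degreewise. Because $B^{i+1}$ is projective, the short exact sequence $0\to Z^i\to P^i\xr{d^i} B^{i+1}\to 0$ splits, giving $P^i\simeq Z^i\oplus C^i$ with $d^i$ annihilating $Z^i$ and carrying $C^i$ isomorphically onto $B^{i+1}\subseteq Z^{i+1}$. Identifying $C^i$ with $B^{i+1}$ via $d^i$, the degree-$i$ term becomes $Z^i\oplus B^{i+1}$ and the only nonzero component of the differential is the inclusion $B^{i+1}\inj Z^{i+1}$. I then read off that $P$ is isomorphic, as an $m$-periodic complex, to $\bigoplus_{i\in\bbZ_m}R_i$, where $R_i$ is the two-term complex $(B^i\inj Z^i)$ concentrated in degrees $i-1$ and $i$, i.e. the projective resolution of $H^i$. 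The point is that these resolutions tile the cyclic set of degrees perfectly: the degree-$i$ part $Z^i$ of $R_i$ together with the degree-$i$ part $B^{i+1}$ of $R_{i+1}$ account for all of $P^i$, with no overlap and no remainder.

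Finally, since each $R_i$ represents $H^i[-i]$ in $D_m(\catA)$ (a projective resolution is isomorphic to its cohomology stalk in the derived category), I conclude $V\simeq P\simeq\bigoplus_{i\in\bbZ_m}R_i\simeq\bigoplus_{i\in\bbZ_m}H^i(V)[-i]$, as required.

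The place where periodicity genuinely differs from the classical bounded case — and so the step I would watch most carefully — is the assembly in the second paragraph. In $D^b(\catA)$ one proves such a decomposition by inducting on the top nonzero cohomological degree and splitting a truncation triangle, using the vanishing of $\Ext^{\ge 2}$; over $\bbZ_m$ there is no extremal degree, so that induction has no starting point and one might fear a \emph{wrap-around} obstruction linking $H^0$ back to $H^{m-1}$. Working with the projective model defuses this: the splittings $P^i\simeq Z^i\oplus C^i$ are chosen independently in each degree with no compatibility constraint between them, and each resolution $R_i$ occupies only the two adjacent degrees $i-1,i$, so the local splittings automatically glue into a global direct-sum decomposition around the cycle. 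Thus the real content is the heredity input that cycles and boundaries are projective, after which the cyclic bookkeeping is routine.
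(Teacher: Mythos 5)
Your proof is correct. Note that the paper itself gives no proof of this statement --- it is imported as a Fact from \cite[Lemma 4.2]{Br13} and \cite[Lemma 5.1]{St18} --- and your argument (replace $V$ by a complex of projectives via Fact \ref{fct:K-proj}, use heredity to see that cycles and boundaries are projective, split each $0\to Z^i\to P^i\to B^{i+1}\to 0$ degreewise, and reassemble the pieces into the two-term complexes $B^i\hookrightarrow Z^i$, each quasi-isomorphic to the stalk $H^i(V)[-i]$) is essentially the standard argument given in those references, including the correct observation that the degreewise splittings need no compatibility condition to glue around the cyclic grading.
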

In particular,
the covering functor $\pi : D^b(\catA) \to D_m(\catA)$ is essentially surjective,
and thus the $m$-periodic derived category $D_m(\catA)$ coincide with
the orbit category $D^b(\catA)/[m]$.

\begin{ex}\label{ex:path m}
Let $\bbk Q$ be a path algebra over a field $\bbk$.
By Fact \ref{fct:hered},
an indecomposable objects of $D_m(\catmod \bbk Q)$ is of the form
$M[i]$ for some $M\in \catmod \bbk Q$ and some $i \in \bbZ_m$.
If $m \ge 2$, then we have
\[
\Hom_{D_m(\catmod \bbk Q)}(M,N[i])=
\begin{cases}
\Hom_{\bbk Q}(M,N) & \text{if $i\equiv 0 \mod m$}\\
\Ext^1_{\bbk Q}(M,N) & \text{if $i\equiv 1 \mod m$}\\
0 & \text{if $i\not\equiv 0,1 \mod m$}
\end{cases}
\]
for any $M, N \in \catmod \bbk Q$ by Fact \ref{fct:cov}.

The category $D_m(\catmod \bbk Q)$ admits 
Auslander-Reiten sequences \cite[Theorem 2.10]{Fu12}
and the covering functor $\pi : D^b(\catmod \bbk Q) \to D_m(\catmod \bbk Q)$
preserves Auslander-Reiten sequences \cite[Theorem 3.1]{Fu12}.
Thus if $Q=1 \leftarrow 2 \leftarrow 3$,
then the Auslander-Reiten quiver of $D_2(\catmod \bbk Q)$ is the following
as explained in Example \ref{ex:Iyama}.
\begin{figure}[H]
\centering
\begin{tikzpicture}
\node (1) at (0,0) {$\circ$};
\node [above right of=1] (12) {$\circ$};
\node [above right of=12] (123) {$\circ$};
\node [below right of=12] (2) {$\circ$};
\node [above right of=2] (23) {$\circ$};
\node [above right of=23] (234) {$\circ$};
\node [below right of=23] (3) {$\circ$};
\node [above right of=3] (34) {$\circ$};
\node [above right of=34] (341) {$\circ$};
\node [below right of=34] (4) {$\circ$};
\node [above right of=4] (41) {$\circ$};
\node [above right of=41] (412) {$\circ$};
 \draw[->] (1) -- (12);
 \draw[->] (12) -- (123);
 \draw[->] (12) -- (2);
 \draw[->] (123) -- (23);
 \draw[->] (2) -- (23);
 \draw[->] (23) -- (234);
 \draw[->] (23) -- (3);
 \draw[->] (234) -- (34);
 \draw[->] (3) -- (34);
 \draw[->] (34) -- (341);
 \draw[->] (34) -- (4);
 \draw[->] (341) -- (41);
 \draw[->] (4) -- (41);
 \draw[->] (41) -- (412);
 \draw[dashed] (2) -- (1);
 \draw[dashed] (3) -- (2);
 \draw[dashed] (4) -- (3);
 \draw[dashed] (23) -- (12);
 \draw[dashed] (34) -- (23);
 \draw[dashed] (41) -- (34);
 \draw[dashed] (234) -- (123);
 \draw[dashed] (341) -- (234);
 \draw[dashed] (412) -- (341);
\node [above left of=1] ('1) {};
\node [above left of=12] ('12) {};
\node [above left of=123] ('123) {};
\node [below right of=41] (41') {.};
\node [below right of=412] (412') {};
\node [above right of=412'] (4123') {};
 \draw[->] ('1) -- (1);
 \draw[->] ('12) -- (12);
 \draw[->] (41) -- (41');
 \draw[->] (412) -- (412');
 \draw[dotted] (-0.75,-0.1) -- (0.75,1.6);
 \draw[dotted] (5,-0.1) -- (6.5,1.6);
 \draw[dashed] (1) -- (-1,0);
 \draw[dashed] (12) -- ('1);
 \draw[dashed] (123) -- ('12);
 \draw[dashed] (41') -- (4);
 \draw[dashed] (412') -- (41);
 \draw[dashed] (4123') -- (412);
\end{tikzpicture}
\end{figure}
\end{ex}
\section{Grothendieck groups of periodic triangulated categories}\label{s:gro per tri}
In this section, we investigate properties of 
the Grothendieck group of a periodic triangulated category.
\subsection{Even periodic case}
Let $m>0$ be an even integer,
and $\catT$ be an essentially small $m$-periodic triangulated category.

\begin{lem}\label{lem:cohomo even}
A cohomological functor $F : \catT \to \catA$ induces
a homomorphism $K_0(\catT) \to K_0(\catA)$.
\end{lem}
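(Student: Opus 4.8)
The plan is to produce the candidate map explicitly as a periodic Euler characteristic and then verify that it kills the defining relations of $K_0(\catT)$. Concretely, I would set
\[
\phi([X]) := \sum_{i=0}^{m-1}(-1)^i [F^i X], \qquad F^i := F\circ \Sg^i,
\]
for $X\in\catT$, and extend $\bbZ$-linearly over the free group $\bigoplus_{[X]}\bbZ[X]$. This is well defined on isomorphism classes since $X\simeq X'$ gives $F^iX\simeq F^iX'$. I would also record that the summand is compatible with the periodic identification: because $\Sg^m\simeq\Id_\catT$ we have $F^{i+m}\simeq F^i$, and since $m$ is even the signs obey $(-1)^{i+m}=(-1)^i$, so the definition does not depend on the chosen representatives $0,\dots,m-1$ of $\bbZ_m$. (It is exactly this sign compatibility that fails for odd $m$, foreshadowing the different behaviour in that case.)

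To show $\phi$ factors through $K_0(\catT)$, I would check $\phi([X])-\phi([Y])+\phi([Z])=0$ for every exact triangle $X\xr{u}Y\xr{v}Z\xr{w}\Sg X$. Applying the cohomological functor $F$ to all rotations of this triangle produces the long exact sequence in $\catA$
\[
\cdots \longto F^i X \longto F^i Y \longto F^i Z \longto F^{i+1}X \longto \cdots,
\]
which by $F^{i+m}\simeq F^i$ is periodic: its terms repeat after $3m$ steps. Labelling them cyclically as $A_0,\dots,A_{3m-1}$ with $A_{3i}=F^iX$, $A_{3i+1}=F^iY$, $A_{3i+2}=F^iZ$ and $A_{j+3m}\simeq A_j$, I would split the sequence into short exact sequences $0\to B_j\to A_j\to B_{j+1}\to 0$, where $B_j:=\Ker(A_j\to A_{j+1})=\Ima(A_{j-1}\to A_j)$; this yields $[A_j]=[B_j]+[B_{j+1}]$ in $K_0(\catA)$.

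The computation then telescopes:
\[
\sum_{j=0}^{3m-1}(-1)^j[A_j]
=\sum_{j=0}^{3m-1}(-1)^j\bigl([B_j]+[B_{j+1}]\bigr)
=[B_0]-(-1)^{3m}[B_{3m}].
\]
Since $m$ is even, $3m$ is even, and $B_{3m}\simeq B_0$ by periodicity, so the right-hand side vanishes. On the other hand, grouping the left-hand sum by the triangle and using $(-1)^{3i}=(-1)^i$ gives
\[
\sum_{j=0}^{3m-1}(-1)^j[A_j]=\sum_{i=0}^{m-1}(-1)^i\bigl([F^iX]-[F^iY]+[F^iZ]\bigr)=\phi([X])-\phi([Y])+\phi([Z]).
\]
Hence the relation holds and $\phi$ descends to a homomorphism $K_0(\catT)\to K_0(\catA)$. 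I expect the main obstacle to be the bookkeeping of the cyclic long exact sequence: justifying that the period-$3m$ identifications $A_{j+3m}\simeq A_j$ and $B_{j+3m}\simeq B_j$ are induced compatibly from the single natural isomorphism $\Sg^m\simeq\Id_\catT$, and correctly isolating the boundary term of the telescoping sum. This is precisely where parity enters—both in making the alternating sum compatible with periodicity and in forcing $[B_0]-(-1)^{3m}[B_{3m}]$ to cancel—so the evenness of $m$ is used in an essential way and the odd case genuinely requires separate treatment.
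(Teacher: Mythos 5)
Your proof is correct and takes essentially the same route as the paper: define $\phi$ as the alternating sum of $[F^iX]$ over one period and kill the triangle relations by going once around the periodic long exact sequence, with the telescoped boundary term $[B_0]-(-1)^{3m}[B_{3m}]$ vanishing precisely because $m$ is even. The paper expresses the identical cancellation as $[\Ker g]=\phi(X)-\phi(Y)+\phi(Z)+(-1)^m[\Ima f]$ together with $\Ker g=\Ima f$, i.e.\ your telescoping argument with the long exact sequence cut open at a single spot.
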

\begin{proof}
Set $\phi(X):=\sum_{i=0}^{m-1}(-1)^i\left[F^i(X)\right] \in K_0(\catA)$.
We have to show that for any exact triangle $X \to Y \to Z \to \Sg X$ in $\catT$,
the equality $\phi(X)-\phi(Y)+\phi(Z)=0$ holds in $K_0(\catA)$.
We have two exact sequences in $\catA$:
\begin{align}\label{seq:1}
F^{m-1}(Z) \xr{f} F^m(X) \simeq F^0(X) \xr{g} F^0(Y),
\end{align}
\begin{align}\label{seq:2}
0\to \Ker g \to F^0(X) \xr{g} F^0(Y) \to \cdots 
\to F^{m-1}(Y) \to F^{m-1}(Z) \xr{f} \Ima f \to 0.
\end{align}
We get
$[\Ker g]= \phi(X)-\phi(Y)+\phi(Z) + (-1)^m[\Ima f]$
by \eqref{seq:2}.
The assumption that $m$ is even and \eqref{seq:1} imply
the equality $\phi(X)-\phi(Y)+\phi(Z)=0$.
\end{proof}

\subsection{Odd periodic case}
Let $m>0$ be an odd integer,
and $\catT$ be an essentially small $m$-periodic triangulated category.

\begin{lem}\label{lem:2-tor odd}
$K_0(\catT)$ is an $\bbF_2$-vector space,
that is, for any element $\alpha \in K_0(\catT)$, we have $2\alpha=0$.
\end{lem}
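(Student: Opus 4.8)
The plan is to reduce everything to the single shift relation $[\Sg X] = -[X]$ in $K_0(\catT)$ and then play it against the $m$-periodicity. First I would recall that for any object $X \in \catT$, rotating the standard exact triangle $X \xr{\id} X \to 0 \to \Sg X$ yields an exact triangle $X \to 0 \to \Sg X \to \Sg X$. Since $[0]=0$ in $K_0(\catT)$, the defining relation of the Grothendieck group forces $[X] + [\Sg X] = 0$, that is, $[\Sg X] = -[X]$ for every object $X$.

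Next I would iterate this identity $m$ times to obtain $[\Sg^m X] = (-1)^m [X]$. On the other hand, the hypothesis that $\catT$ is $m$-periodic provides a natural isomorphism $\Sg^m \simeq \Id_{\catT}$, so in particular $\Sg^m X \simeq X$ and hence $[\Sg^m X] = [X]$ in $K_0(\catT)$. Comparing the two expressions gives $(-1)^m [X] = [X]$; since $m$ is odd we have $(-1)^m = -1$, so $-[X] = [X]$, i.e. $2[X] = 0$ for every object $X \in \catT$.

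Finally, since $K_0(\catT)$ is generated as an abelian group by the classes $[X]$, an arbitrary element $\alpha$ is a finite $\bbZ$-linear combination $\alpha = \sum_j n_j [X_j]$, and applying $2[X_j] = 0$ term by term yields $2\alpha = 0$; thus every element is $2$-torsion and $K_0(\catT)$ is an $\bbF_2$-vector space. The only point carrying any content — and really the crux of the argument — is the shift relation $[\Sg X] = -[X]$; everything after it is formal. I expect no genuine obstacle here, since this relation rests on the standard fact that $X \to 0 \to \Sg X \to \Sg X$ is an exact triangle, obtained by rotating the contractible triangle on $\id_X$, which is immediate from the triangulated category axioms.
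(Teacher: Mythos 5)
Your proposal is correct and follows exactly the same route as the paper: rotate the identity triangle to get the exact triangle $X \to 0 \to \Sg X \to \Sg X$, deduce $[\Sg X]=-[X]$, and combine $[\Sg^m X]=(-1)^m[X]$ with the $m$-periodicity isomorphism $\Sg^m\simeq\Id_\catT$ and the oddness of $m$ to conclude $2[X]=0$. The final remark that the classes $[X]$ generate $K_0(\catT)$ is left implicit in the paper but adds nothing beyond what the paper intends.
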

\begin{proof}
By the axiom of triangulated categories,
$X\to 0 \to \Sg X \to \Sg X$ is an exact triangle for any $X \in \catT$.
It implies $[\Sg X]=-[X]$ in $K_0(\catT)$.
Hence we have $[X]=[\Sg^m X]=(-1)^m[X]=-[X]$.
Thus we conclude that $2[X]=0$ for any $X\in \catT$.
\end{proof}

\begin{lem}\label{lem:cohomo odd}
A cohomological functor $\catT \to \catA$ induces
a homomorphism $K_0(\catT) \to K_0(\catA)_{\bbF_2}$,
where 
$K_0(\catA)_{\bbF_2}:=K_0(\catA)\otimes_{\bbZ}\bbF_2\simeq K_0(\catA)/2 K_0(\catA)$.
\end{lem}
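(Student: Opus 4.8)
The plan is to imitate the proof of Lemma~\ref{lem:cohomo even} and then reduce everything modulo~$2$. As in that proof, set $\phi(X):=\sum_{i=0}^{m-1}(-1)^i[F^i(X)]\in K_0(\catA)$, and let $\ol{\phi}(X)$ be its image under the projection $K_0(\catA)\surj K_0(\catA)_{\bbF_2}$; since signs are invisible in $\bbF_2$ one may equally write $\ol{\phi}(X)=\sum_{i=0}^{m-1}[F^i(X)]$. The value $\ol{\phi}(X)$ depends only on the isomorphism class of $X$, so it extends $\bbZ$-linearly to the free abelian group on $[\catT]$; it descends to a homomorphism $K_0(\catT)\to K_0(\catA)_{\bbF_2}$ exactly when $\ol{\phi}(X)-\ol{\phi}(Y)+\ol{\phi}(Z)=0$ for every exact triangle $X\to Y\to Z\to\Sg X$ in $\catT$. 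So the whole task reduces to verifying this one identity.

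First I would note that the exact sequences \eqref{seq:1} and \eqref{seq:2} produced by the cohomological functor $F$ and the $m$-periodicity of $\catT$ do not depend on the parity of $m$, and that the alternating-sum computation on \eqref{seq:2} carried out in the even case is itself parity-free; it yields
\[
[\Ker g]=\phi(X)-\phi(Y)+\phi(Z)+(-1)^m[\Ima f]\quad\text{in }K_0(\catA).
\]
Exactness at $F^0(X)$ in \eqref{seq:1} gives $\Ker g=\Ima f$. The only place the parity of $m$ intervenes is the sign $(-1)^m$: when $m$ is \emph{odd} it equals $-1$, and the two occurrences of $[\Ima f]$ add rather than cancel, leaving
\[
\phi(X)-\phi(Y)+\phi(Z)=2[\Ima f]\quad\text{in }K_0(\catA).
\]

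This is the crux of the matter, and it is exactly where the even and odd cases diverge: instead of the clean cancellation of Lemma~\ref{lem:cohomo even}, one is left with an even multiple of a single class. Projecting along $K_0(\catA)\surj K_0(\catA)_{\bbF_2}=K_0(\catA)/2K_0(\catA)$ annihilates $2[\Ima f]$, so $\ol{\phi}(X)-\ol{\phi}(Y)+\ol{\phi}(Z)=0$ and $\ol{\phi}$ descends to the desired homomorphism. I do not expect a real obstacle, since all the homological bookkeeping is literally that of the even case; the only conceptual point is to recognize that the obstruction to a $K_0(\catA)$-valued invariant is precisely a factor of~$2$, which forces the target to be tensored with $\bbF_2$. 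This is consistent with Lemma~\ref{lem:2-tor odd}: the source $K_0(\catT)$ is already an $\bbF_2$-vector space in the odd case, so a well-behaved invariant could not have landed in $K_0(\catA)$ anyway.
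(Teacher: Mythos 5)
Your proposal is correct and follows essentially the same route as the paper: the same alternating sum $\phi$, the same two exact sequences from the even case, the identification $\Ker g=\Ima f$ (the paper's unnamed object $K$), and the observation that for odd $m$ the defect is $2[K]$, which dies in $K_0(\catA)/2K_0(\catA)$. Your write-up is if anything slightly more explicit than the paper's, which simply cites ``the same calculation as in Lemma \ref{lem:cohomo even}.''
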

\begin{proof}
Let $X \to Y \to Z \to \Sg X$ be an exact triangle in $\catT$.
Set $\phi(X):=\sum_{i=0}^{m-1}(-1)^i\left[F^i(X)\right] \in K_0(\catA)$.
Then we have, for some object $K\in \catA$,
\[
[K]= \phi(X)-\phi(Y)+\phi(Z) + (-1)^m[K]
\]
in $K_0(\catA)$ by the same calculation as in Lemma \ref{lem:cohomo even}.
Because $m$ is odd,
We get $\phi(X)-\phi(Y)+\phi(Z) = 2[K] \equiv 0 \mod 2 K_0(\catA)$.
Hence the assignment 
$[\catT] \ni[X] \mapsto \phi(X) \mod 2 K_0(\catA) \in K_0(\catA)_{\bbF_2}$
extends to a homomorphism $K_0(\catT)\to K_0(\catA)_{\bbF_2}$.
\end{proof}

\section{Proof of Theorem \ref{thm:main}}\label{s:gro per der}
Let $\catA$ be an enough projective abelian category of finite global dimension,
$m>0$ a positive integer,
and $p$ the period of $D_m(\catA)$.
There are three cases by Remark \ref{rmk:period}:
\begin{enur}
\item $m$ is even and $p=m$,
\item $m$ is odd and $p=m$, and
\item $m$ is odd and $p=2m$.
\end{enur}
We prove Theorem \ref{thm:main} separately in the three cases above. 
Note that the proof of the case (iii) also works the case (ii),
but we give separate proofs
since the proof of the case (ii) is simple and motivates the proof of (iii).

In any cases,
we have a surjective homomorphism $\psi : K_0(\catA)\to K_0\left(D_m(\catA)\right)$
induced by the natural $\delta$-functor $\catA \to D_m(\catA)$
by Proposition \ref{prp:summ}.

(i)
If $m$ is even,
the cohomology functor $H:D_m(\catA) \to \catA$ induces
a homomorphism
\[
\phi : K_0\left(D_m(\catA)\right) \to K_0(\catA),\quad
[V] \mapsto \sum_{i=1}^m (-1)^i [H^i(V)]
\]
by Lemma \ref{lem:cohomo even}.
The homomorphism $\phi$ is a retraction of $\psi$,
and hence $\psi$ is injective.
Thus $\psi$ is an isomorphism.

(ii)
If $m$ is odd and $p=m$,
then $D_m(\catA)$ is an odd periodic triangulated category.
Thus $K_0\left( D_m(\catA) \right)$ is an $\bbF_2$-vector space
by Lemma \ref{lem:2-tor odd},
and $\psi$ induces a surjective homomorphism
$\psi_{\bbF_2}: K_0(\catA)_{\bbF_2} \to K_0\left( D_m(\catA) \right)$.
The cohomology functor $H:D_m(\catA) \to \catA$ also induces a homomorphism
\[
\phi_{\bbF_2} : K_0\left(D_m(\catA)\right) \to K_0(\catA)_{\bbF_2},\quad
[V] \mapsto \sum_{i=1}^m [H^i(V)] \mod 2 K_0(\catA)
\]
by Lemma \ref{lem:cohomo odd}.
The homomorphism $\phi_{\bbF_2}$ is clearly a retraction of $\psi_{\bbF_2}$,
and hence $\psi_{\bbF_2}$ is an isomorphism.

(iii)
If $m$ is odd and $p=2m$,
then $D_m(\catA)$ is an even periodic triangulated category.
Applying Lemma \ref{lem:cohomo even} to the cohomology functor $H:D_m(\catA)\to\catA$,
we have an induced homomorphism $\phi : K_0\left(D_m(\catA)\right) \to K_0(\catA)$,
but it is a zero map.
Indeed, we have
\begin{align*}
\psi(V)
&=\sum_{i=1}^{2m}(-1)^i [H^i(V)]
=\sum_{i=1}^{m}(-1)^i [H^i(V)]+ \sum_{i=m+1}^{2m}(-1)^i[H^i(V)]\\
&=\sum_{i=1}^{m}(-1)^i [H^i(V)]-\sum_{i=1}^{m}(-1)^i [H^i(V)]
=0.
\end{align*}
Thus we cannot prove the theorem by the same way as (i).

Although $D_m(\catA)$ is even periodic,
we can prove the similar results as Lemma \ref{lem:2-tor odd} and \ref{lem:cohomo odd},
that is, $K_0\left( D_m(\catA) \right)$ is an $\bbF_2$-vector space
and the assignment $\phi_{\bbF_2}(V)=\sum_{i=1}^m [H^i(V)] \mod 2 K_0(\catA)$
defines a homomorphism 
$\phi_{\bbF_2} : K_0\left(D_m(\catA)\right) \to K_0(\catA)_{\bbF_2}$.
We first prove that $K_0\left( D_m(\catA) \right)$ is an $\bbF_2$-vector space.
For an $m$-periodic complex $V=(V^i,d^i)_{i\in\bbZ_m}$, 
$V$ and $\Sg^m V = (V^i,-d^i)_{i\in\bbZ_m}$ is not necessary isomorphic in $D_m(\catA)$
in general,
but they define the same class $[V]=[\Sg^m V]$ 
in the Grothendieck group $K_0\left( D_m(\catA) \right)$.
We prove this by induction on the number $n_V$ of $i\in \bbZ_m$ with $V^i \ne 0$.
It is clear if $n_V=0, 1$. 
Suppose $n_V\ge2$.
Then there exists $i\in \bbZ_m$ such that $V^i\ne0$.
We may assume that $i=0$.
There exists the following exact sequences in $C_m(\catA)$.
\[
\begin{tikzcd}[row sep=scriptsize,column sep=small]
0 \arrow[d] && 0 \arrow[d] & 0\arrow[d] & 0\arrow[d] \\
U \arrow[d] & : (\cdots \arrow[r] & V^{m-1} \arrow[r] \arrow[d,equal] & Z^0(V) \arrow[r,"0"] \arrow[d] & V^1 \arrow[r] \arrow[d,equal] & \cdots )\\
V \arrow[d] & : (\cdots \arrow[r] & V^{m-1} \arrow[d] \arrow[r] & V^0 \arrow[r] \arrow[d] & V^1 \arrow[r] \arrow[d] & \cdots ) \\
V^0/Z^0(V) \arrow[d] & : (\cdots \arrow[r] & 0 \arrow[d] \arrow[r] & V^0/Z^0(V) \arrow[d] \arrow[r] & 0 \arrow[r] \arrow[d] & \cdots ), \\
0&& 0 & 0 & 0
\end{tikzcd}
\]
and
\[
\begin{tikzcd}[row sep=scriptsize,column sep=small]
0 \arrow[d] && 0 \arrow[d] & 0\arrow[d] & 0\arrow[d] \\
Z^0(V) \arrow[d] & : (\cdots \arrow[r] & 0 \arrow[r] \arrow[d] & Z^0(V) \arrow[r] \arrow[d] & 0 \arrow[r] \arrow[d] & \cdots )\\
U \arrow[d] & : (\cdots \arrow[r] & V^{m-1} \arrow[r] \arrow[d,equal] & Z^0(V) \arrow[r,"0"] \arrow[d] & V^1 \arrow[r] \arrow[d,equal] & \cdots )\\
W \arrow[d] & : (\cdots \arrow[r] & V^{m-1} \arrow[d] \arrow[r] & 0 \arrow[d] \arrow[r] & V^1 \arrow[r] \arrow[d] & \cdots ) .\\
0&& 0 & 0 & 0
\end{tikzcd}
\]
Noting that $V^i=(\Sg^m V)^i$ and $Z^i(V)=Z^i(\Sg^m V)$,
we also have exact sequences $0\to \Sg^m U \to \Sg^m V \to V^0/Z^0(V)\to 0$
and $0 \to Z^0(V) \to \Sg^m U \to \Sg^m W \to 0$.
Since $n_W < n_V$, we have $[W]= [\Sg^m W]$ in $K_0\left( D_m(\catA) \right)$
by the induction hypothesis.
The canonical $\delta$-functor $C_m(\catA) \to D_m(\catA)$ carries
the exact sequences above to exact triangles in $D_m(\catA)$,
and thus we have
\[
[V]
=[W]+[Z^0(V)]+[V^0/Z^0(V)]
=[\Sg^m W] + [Z^0(V)] + [V^0/Z^0(V)]
=[\Sg^m V]=-[V].
\]
Hence $K_0\left( D_m(\catA) \right)$ is an $\bbF_2$-vector space.
The method above is Gorsky's induction technique for periodic complexes,
which appears in the proof of Fact \ref{fct:stalk}.
See \cite[Proposition 9.7]{Go13}.

Next, we prove that 
the assignment $\phi_{\bbF_2}(V)=\sum_{i=1}^m [H^i(V)] \mod 2 K_0(\catA)$
defines a homomorphism 
$\phi_{\bbF_2} : K_0\left(D_m(\catA)\right) \to K_0(\catA)_{\bbF_2}$.
Let $U\to V \to W \to \Sg U$ be an exact triangle in $D_m(\catA)$.
$m$-periodic complexes $U$ and $\Sg^m U$ are not isomorphic
but $H^m(U)=H^0(U)$ holds.
Thus we have two exact sequences in $\catA$:
\begin{align*}
H^{m-1}(W) \xr{f} H^m(U) \simeq H^0(U) \xr{g} H^0(V),
\end{align*}
\begin{align*}
0\to \Ker g \to H^0(U) \xr{g} H^0(V) \to \cdots 
\to H^{m-1}(V) \to H^{m-1}(W) \xr{f} \Ima f \to 0.
\end{align*}
A similar discussion as in Lemma \ref{lem:cohomo odd} implies
$\phi_{\bbF_2}(U) -\phi_{\bbF_2}(V) +\phi_{\bbF_2}(W) \equiv 0 \mod 2 K_0(\catA)$.

The rest of the proof is similar to (ii).
\qed

\end{document}